\newtheorem{thm}{Theorem}[section]
\newtheorem*{main1}{Main Theorem 1}
\newtheorem*{main2}{Main Theorem 2}
\newtheorem{lem}[thm]{Lemma}
\newtheorem{prop}[thm]{Proposition}
\newtheorem{cor}[thm]{Corollary}
\newtheorem{defi}[thm]{Definition}
\newtheorem{re}[thm]{Result}
\newtheorem{question}[thm]{Question}
\newtheorem{example}[thm]{Example}
\newtheorem{remark}[thm]{Remark}
\newenvironment{rmk}{\begin{remark} \em}{\end{remark}}
\newcommand{\C}{\mathbb{C}}
\newcommand{\Q}{\mathbb{Q}}
\DeclareMathOperator{\ord}{ {\rm ord} }
\begin{document} 
\title{Upper Bounds for Cyclotomic Numbers} 
\author{Tai Do Duc\\ Division of Mathematical Sciences\\
School of Physical \& Mathematical Sciences\\
Nanyang Technological University\\
Singapore 637371\\
Republic of Singapore\\[5mm]
Ka Hin Leung \footnote{Research is supported by grant R-146-000-276-114, Ministry of Education, Singapore}\\ Department of Mathematics\\ 
National University of Singapore\\
Kent Ridge, Singapore 119260\\
Republic of Singapore\\[5mm]
Bernhard Schmidt \footnote{Research is supported by grant RG27/18 (S), Ministry of Education, Singapore}
\\ Division of Mathematical Sciences\\
School of Physical \& Mathematical Sciences\\
Nanyang Technological University\\
Singapore 637371\\
Republic of Singapore}

\maketitle
\newpage

\begin{abstract}
Let $q$ be a power of a prime $p$, let $k$ be a nontrivial divisor of $q-1$ and write $e=(q-1)/k$. 
We study upper bounds for cyclotomic numbers $(a,b)$ of order $e$
over the finite field $\mathbb{F}_q$. 
A general result of our study is that $(a,b)\leq 3$ for all $a,b \in \mathbb{Z}$ 
if $p> (\sqrt{14})^{k/\ord_k(p)}$. More conclusive results will be obtained  through
seperate investigation of the five types of cyclotomic numbers:
 $(0,0), (0,a), (a,0), (a,a)$ and $(a,b)$, where $a\neq b$ and $a,b \in \{1,\dots,e-1\}$.
The main idea we use  is to transform equations over $\mathbb{F}_q$ into equations
 over the field of complex numbers on which we have more information. 
 A major tool for the improvements 
 we obtain over known results is new upper bounds on the norm of cyclotomic integers.  
\end{abstract}

\noindent $2010$ Mathematics Subject Classification 11T22 (primary), 11C20 (secondary) \\
Keywords: equations over finite fields, norm bound, cyclotomic integers, determinant bound, vanishing sum of roots of unity 


\section{Introduction and Definitions}

First, we fix some notations and definitions. By $q$ we denote a power of a prime $p$. Let $e$ and $k$ be nontrivial divisors of $q-1$ such that $q=ek+1$. Let $g$ denote a primitive element of the finite field $\mathbb{F}_q$. For each $a \in \mathbb{Z}$, write 
\begin{equation} \label{class of powers congruent to a}
C_a=\{g^a,g^{a+e},...,g^{a+(k-1)e}\}.
\end{equation}
As $C_a=C_{a+e}$, we only need to consider the sets $C_a$ with $a\in \{0,1,\dots,e-1\}$. 

\begin{defi} \label{cyc}
For $a,b \in \{0,1,...,e-1\}$, define $(a,b)$ as the number of solutions to the equation 
$$1+x=y, \ x\in C_a, \ y\in C_b.$$ 
Equivalently, this is the number of pairs $(r,s)$ with $0 \leq r,s \leq k-1$ such that
\begin{equation} \label{cyclotomic number (a,b)}
1+g^{a+re}=g^{b+se}.
\end{equation}
The number $(a,b)$ is called a \textbf{cyclotomic number} of order $e$.
\end{defi}

Cyclotomic numbers have been studied for decades by many authors, as they have applications in various areas. 
These numbers can be used to compute Jacobi sums, and vice versa, see \cite{agr}. Vandiver \cite{leh}, \cite{van1}, \cite{van2}, \cite{van3}, \cite{van4} related cyclotomic numbers to Fermat's Last Theorem and proved the theorem for exponents $\leq 2000$. 
Cyclotomic classes $C_a$ were used by Paley \cite{pal} in 1993 to construct difference sets. This approach was later employed by many other authors. Storer's book \cite{sto} summarizes the results in this direction up to $1967$. 
In the 1960s to 1980s, Baumert, Whiteman, Evans et al. explicitly determined all numbers $(a,b)$ of orders $e\leq 12$ and $e=14,15,16,18,20,24$.

\medskip

Under asymptotic conditions, cyclotomic numbers exhibit an interesting uniform behaviour. 
Katre \cite{kat} proved in $1989$ that, for  fixed $e$ and $q\to \infty$, we asymptotically have  $(a,b)\approx q/e^2$ for all $a,b\in \mathbb{Z}$. 
On the other hand, fixing $k$, it was proved by Betshumiya et al. \cite{bet} that $(0,0) \leq 2$ if $p$ is \textit{sufficiently large} compared to $k$.
 In this paper, the condition ``sufficiently large'' is not explicitly specified and, in fact, the lower bound on $p$
 required for their method is difficult to write down explicitly. 
 The goal of our paper is to find simple and improved lower bound on $p$  which guarantees that all the numbers $(a,b)$ are small. 
 The following theorem is a main result of our study. 

\begin{main1} \label{general}
Let $q$ be a power of a prime $p$. Let $e$ and $k$ be nontrivial divisors of $q-1$ such that $q=ek+1$. If $$p>\left(\sqrt{14}\right)^{k/\ord_k(p)},$$ 
then 
$(a,b)\leq 3 \ \ \text{for all} \ \ a,b\in \mathbb{Z}$.
\end{main1}

If $k$ is a prime, we obtain a better bound as follows.

\begin{main2} \label{general2}
Let $q$ be a power of a prime $p$. Let $e$ and $k$ be nontrivial divisors of $q-1$ such that $k$ is a prime and $q=ek+1$. If 
$$p>(3^{k-1}k)^{1/\ord_k(p)},$$
then 
$$(a,b) \leq 2 \ \ \text{for all} \ \ a,b\in \mathbb{Z}.$$
\end{main2}

\medskip

We continue with introducing some notation and results we need later. 
For a positive integer $k$, let $\zeta_k$ denote a complex primitive $k$th root of unity. A square matrix is called \textbf{circulant} if each of its rows (except the first) is obtained from the previous row by shifting the entries one position to the right and moving the last entry to the front. Moreover, given a matrix $H$, we denote the conjugate transpose of $H$ by $H^*$. The following result about eigenvalues and eigenvectors of a circulant matrix is well known, see \cite{dav}, for example.

\begin{re} \label{cir}
Let $k$ be a positive integer and let $M$ be a circulant matrix with the first row $(a_0,\dots,a_{k-1})$ 
where $a_0,\ldots,a_{k-1}\in \C$.  Then the eigenvalues and eigenvectors of $M$ are
$$\lambda_i=\sum_{j=0}^{k-1}a_j\zeta_k^{ij}, \ X_i=(1,\zeta_k^i,\dots,\zeta_k^{i(k-1)})^T \  \text{for} \ 0\leq i \leq k-1.$$
\end{re}

\medskip

In the next section, we review some results on vanishing sums of roots of unity which will be needed for our study. The following terminology was used in \cite{con}. 
Let $T$ be a finite set of complex roots of unity and let $c_{\alpha}$, $\alpha \in T$, be nonzero
rational numbers. 
The sum 
$$S=\sum_{\alpha\in T} c_\alpha \alpha, \ c_\alpha\in \mathbb{Q}\setminus \{0\},$$
is called a \textbf{vanishing sum} of roots of unity if $S=0$. 
We say that $S$ is \textbf{nonempty} if $T\neq \emptyset$.
 The \textbf{length} $l(S)$ is the cardinality of $T$.
 The \textbf{exponent} $e(S)$ denotes the least common multiple of all orders of the roots of unity $\alpha\in T$. 
 We say that $S$ is \textbf{similar} to any sum of the form $k\cdot \beta S'$, where $k \in \mathbb{Q}\setminus\{0\}$ and $\beta$ is a root of unity and $S'$ has the form 
 $$S'=\sum_{\alpha\in T} (\varepsilon_\alpha c_\alpha) (\varepsilon_\alpha \alpha), \ \text{where} \ \ \varepsilon_\alpha \in \{1,-1\}.$$
 We call the vanishing sum $S$ \textbf{minimal} if $S$ contains no vanishing subsum. 
 The sum $S$ is a \textbf{reduced sum} if  $\alpha=1$ for some $\alpha\in T$.

 \bigskip

\section{Vanishing Sums of Roots of Unity}
The following result states that a minimal vanishing sum of roots of unity is similar to a vanishing sum whose order is squarefree, see \cite[Corollary 3.2]{lam} or \cite[Theorem 1]{con} for a proof.
 
\begin{re} \label{squarefree}
If $S=\alpha_1+\cdots+\alpha_n$ is a minimal vanishing sum of $m$th roots of unity, 
then after multiplying $S$ by a suitable $m$th root of unity, we may assume 
that all $\alpha_i$ are $m_0$th roots of unity, where $m_0$ is the largest square-free divisor of $m$. 
\end{re}

\medskip

The next result is part of \cite[Theorem 6]{con} and will be useful for our study. 
 
\begin{re} \label{vanishing sum of small length}
Let $S$ be a nonempty vanishing sum of length at most $6$ that does
not contain subsums similar to $1 + (-1)$ or $1+\zeta_3+\zeta_3^2$.
Then  $S$ is similar to one of the sums
$$1+\zeta_5+\zeta_5^2+\zeta_5^3+\zeta_5^4, $$  
$$-\zeta_3-\zeta_3^2+\zeta_5+\zeta_5^2+\zeta_5^3+\zeta_5^4.$$
\end{re}

\bigskip

\section{Bounds on Norms of Cyclotomic Integers}
A \textbf{cyclotomic integer} (not to be confused with a cyclotomic number) is
an algebraic integer in a cyclotomic field. Every cyclotomic integer 
can be written as a sum of complex roots of unity.
The improvements over the previously known results  we obtain
arise from new bounds on absolute norms of cyclotomic integers. 
First, we discuss a general norm bound.

\medskip

Note that every cyclotomic integer in  $\Q(\zeta_k)$ can be written as $f(\zeta_k)$, where
$f(x)=\sum_{i=0}^{k-1}a_ix^i$ is a polynomial with integer coefficients.
Since $|f(\zeta_k^j)| \le \sum_{i=0}^{k-1} |a_i|$, an obvious bound for the absolute norm
of $f(\zeta_k)$ is 
\begin{equation} \label{obvious}
|N(f(\zeta_k))| = \left| \prod_{j: \gcd(j,k)=1} f(\zeta_k^j)\right| \le \left(\sum_{i=0}^{k-1} |a_i|\right)^{\varphi(k)}.
\end{equation}
In this section, we provide some stronger bounds that are suitable for the applications
to cyclotomic numbers we are interested in.

\begin{thm} \label{bound for norm of an algebraic integer}
Let $k$ be a positive integer, let $f(x)=\sum_{i=0}^{k-1}a_ix^i \in \mathbb{Z}[x]$ and let $N$ denote the absolute norm of $\mathbb{Q}(\zeta_k)$. Then
\begin{equation} \label{bound for norm}
|N(f(\zeta_k))| \leq \left( \frac{k}{\varphi(k)}\sum_{i=0}^{k-1}a_i^2 \right)^{\varphi(k)/2}.
\end{equation}
In particular, if $\sum_{i=0}^{k-1}a_i^2 \geq 3$, then
\begin{equation} \label{bound for norm - resulting inequality}
|N(f(\zeta_k))| \leq \left( \sum_{i=0}^{k-1} a_i^2 \right)^{k/2}.
\end{equation}
\end{thm}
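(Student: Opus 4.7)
The plan is to combine Parseval's identity on the full set of $k$-th roots of unity with an AM-GM estimate restricted to the $\varphi(k)$ primitive conjugates. Nothing more sophisticated should be needed.

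First I would compute $\sum_{j=0}^{k-1}|f(\zeta_k^j)|^2$. Expanding $|f(\zeta_k^j)|^2 = \sum_{i,l}a_ia_l\zeta_k^{(i-l)j}$ and swapping the order of summation, the inner sum $\sum_{j=0}^{k-1}\zeta_k^{(i-l)j}$ equals $k$ when $i=l$ and $0$ otherwise, so
\[
\sum_{j=0}^{k-1}|f(\zeta_k^j)|^2 = k\sum_{i=0}^{k-1}a_i^2.
\]
This is just Parseval's identity for the discrete Fourier transform.

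Next, since $N(f(\zeta_k))=\prod_{\gcd(j,k)=1}f(\zeta_k^j)$, we have $|N(f(\zeta_k))|^2=\prod_{\gcd(j,k)=1}|f(\zeta_k^j)|^2$. Applying the AM-GM inequality to these $\varphi(k)$ nonnegative real numbers and then dropping the nonprimitive terms from the sum yields
\[
|N(f(\zeta_k))|^2 \leq \left(\frac{1}{\varphi(k)}\sum_{\gcd(j,k)=1}|f(\zeta_k^j)|^2\right)^{\varphi(k)} \leq \left(\frac{k}{\varphi(k)}\sum_{i=0}^{k-1}a_i^2\right)^{\varphi(k)},
\]
which gives \eqref{bound for norm} upon taking square roots.

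For the ``in particular'' statement, set $r=k/\varphi(k)\geq 1$ and $S=\sum_{i=0}^{k-1}a_i^2$. The claim $(rS)^{\varphi(k)/2}\leq S^{k/2}$ is equivalent to $r^{\varphi(k)}\leq S^{k-\varphi(k)}$, and since $S\geq 3$ it suffices to show $r\leq 3^{r-1}$, i.e. $\log r\leq (r-1)\log 3$ for all $r\geq 1$. I would verify this by noting that both sides vanish at $r=1$ while the derivative of the right side is the constant $\log 3>1\geq 1/r$ for $r\geq 1$, so the right side dominates throughout. That finishes \eqref{bound for norm - resulting inequality}.

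The only mildly subtle point is the last calculus check; otherwise the argument is a direct Parseval plus AM-GM. I do not foresee a real obstacle, just the need to be careful that the intermediate step of discarding the nonprimitive conjugates is legitimate precisely because each $|f(\zeta_k^j)|^2\geq 0$.
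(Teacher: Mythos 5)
Your proof is correct and follows essentially the same route as the paper: the Parseval identity $\sum_{j=0}^{k-1}|f(\zeta_k^j)|^2=k\sum a_i^2$ combined with AM--GM over the $\varphi(k)$ primitive conjugates. The only cosmetic difference is in the ``in particular'' step, where the paper shows $g(x)=(kS/x)^{x/2}$ is increasing on $[1,k]$ while you reduce to the equivalent elementary inequality $r\leq 3^{r-1}$ for $r=k/\varphi(k)\geq 1$; both hinge on $S\geq 3>e$ and your calculus check is sound.
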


\begin{proof}
We have
$$\sum_{h=0}^{k-1} |f(\zeta_k^h)|^2=\sum_{i,j,h=0}^{k-1}a_ia_j\zeta_k^{(i-j)h}=k\sum_{i=0}^{k-1}a_i^2.$$
By the inequality between arithmetic and geometric means, we have
$$|N(f(\zeta_k))|=|\prod_{(h,k)=1} f(\zeta_k^h)| \leq \left(\frac{\sum_{(h,k)=1}|f(\zeta_k^h)|^2}{\varphi(k)}\right)^{\varphi(k)/2} \leq \left(\frac{k}{\varphi(k)}\sum_{i=0}^{k-1}a_i^2 \right)^{\varphi(k)/2},$$
which proves (\ref{bound for norm}). \\
Now consider the case $S=\sum_{i=0}^{k-1}a_i^2 \geq 3$. Since $g(x)=(kS/x)^{x/2}$ is increasing over the interval $[1,k]$, we obtain
$$|N(f(\zeta_k))|\leq g(\varphi(k)) \leq g(k)=S^{k/2}.$$
\end{proof}

\medskip

In the case $k$ is a prime, we obtain a different bound on the norm of $f(\zeta_k)$ in the next theorem. 
This bound is better than (\ref{bound for norm}) in certain situations. 

\medskip

For the rest of this section, we assume that $k$ is a prime. For $f(x)=\sum_{i=0}^{k-1}a_ix^i$, let $M$ denote the circulant matrix whose first row is $(a_0,\dots,a_{k-1})$ and let $N$ denote the $(k-1)\times (k-1)$ matrix obtained from $M$ by deleting its first row and its first column. To find an upper bound for $|N(f(\zeta_k))|$, we first find a relation between $N(f(\zeta_k))$ and $\det(M)$ or $\det(N)$. Then an upper bound for $|\det(M)|$ or $|\det(N)|$ will give us an upper bound for $|N(f(\zeta_k))|$. 

\medskip

 Bounds for the determinant of a matrix are abundant in the literature. We only need the following result by Schinzel \cite{schinzel}.

\begin{re} \label{schinzel bound}
Let $N=(a_{ij})_{i,j=0}^{n-1}$ be an $n\times n$ matrix with real entries. For $i=0,1,...,n-1$, write
$N_i^{+}= \sum_{j=0}^{n-1}\max\{0,a_{ij}\}$ and $N_i^{-} = \sum_{j=0}^{n-1}\max\{0, -a_{ij}\}$.
We have
\begin{equation} \label{determinant bound}
|\det(N)| \leq \prod_{i=0}^{n-1} \max\{N_i^{+}, N_i^{-}\}.
\end{equation}
\end{re}

\begin{prop} \label{bound for norm of a cyclotomic integer 2}
Using the notation introduced above, we have the following
\begin{itemize}
\item[(a)] If  $\sum_{i=0}^{k-1}a_i \neq 0$, then
\begin{equation} \label{bound for norm of alpha 2}
N(f(\zeta_k)) = \frac{\det(M)}{\sum_{i=0}^{k-1}a_i}.
\end{equation}
\item[(b)] If $\sum_{i=0}^{k-1}a_i=0$, then
\begin{equation} \label{bound for norm of alpha 3}
N(f(\zeta_k)) = k\det(N).
\end{equation}
\end{itemize}
\end{prop}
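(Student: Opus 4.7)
The plan is to use Result~\ref{cir} on the eigenvalues of circulants to evaluate $\det(M)$ as a product of values of $f$ at roots of unity, and then, in case (b), to recover the missing factor by a rank-one deformation.

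By Result~\ref{cir}, the eigenvalues of $M$ are $\lambda_i = f(\zeta_k^i)$ for $0 \le i \le k-1$, so $\det(M) = \prod_{i=0}^{k-1} \lambda_i$. Since $k$ is prime, the powers $\zeta_k^i$ with $1 \le i \le k-1$ run through all primitive $k$th roots of unity, and thus $N(f(\zeta_k)) = \prod_{i=1}^{k-1} \lambda_i$. Because $\lambda_0 = \sum_{i=0}^{k-1} a_i$, part (a) is immediate upon dividing.

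For part (b), $\lambda_0 = 0$ and hence $\det(M) = 0$, so the formula of (a) degenerates. I would introduce the deformation $M + tJ$, where $J$ is the $k\times k$ all-ones matrix. Since $J$ is itself a circulant, $M + tJ$ is again circulant, now with first row $(a_0+t,\dots,a_{k-1}+t)$. A second application of Result~\ref{cir}, using $\sum_{j=0}^{k-1}\zeta_k^{ij} = 0$ for $1 \le i \le k-1$ (which is where $k$ prime enters), shows its eigenvalues are $tk$ at index $0$ and $\lambda_i$ for $i \ge 1$. Therefore
\[
\det(M+tJ) \;=\; tk \cdot \prod_{i=1}^{k-1}\lambda_i \;=\; tk\cdot N(f(\zeta_k)).
\]
On the other hand, since $tJ = t\mathbf{1}\mathbf{1}^T$ is a rank-one update, the matrix determinant lemma gives $\det(M+tJ) = \det(M) + t\,\mathbf{1}^T\mathrm{adj}(M)\,\mathbf{1} = t\,\mathbf{1}^T\mathrm{adj}(M)\,\mathbf{1}$. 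Comparing the two expressions yields
\[
\mathbf{1}^T\mathrm{adj}(M)\,\mathbf{1} \;=\; k\cdot N(f(\zeta_k)).
\]

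It then remains to identify the left side with $k^2\det(N)$. Assume first that $\mathrm{rank}(M) = k-1$. The identities $M\,\mathrm{adj}(M) = \mathrm{adj}(M)\,M = 0$ force the columns of $\mathrm{adj}(M)$ into the right nullspace of $M$ and the rows into the left nullspace; since the rows and columns of the circulant $M$ both sum to $\sum a_i = 0$, both nullspaces are spanned by $\mathbf{1}$, so $\mathrm{adj}(M) = c\,\mathbf{1}\mathbf{1}^T$ for some scalar $c$. Reading off the $(0,0)$ entry gives $c = (\mathrm{adj}(M))_{00} = \det(N)$, hence $\mathbf{1}^T\mathrm{adj}(M)\,\mathbf{1} = k^2\det(N)$, and combining with the previous paragraph delivers $N(f(\zeta_k)) = k\det(N)$. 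In the degenerate case $\mathrm{rank}(M) \le k-2$, both sides of the claimed identity vanish — the left side because $\prod_{i=1}^{k-1}\lambda_i = 0$ and the right side because $N$ is a $(k-1)\times(k-1)$ submatrix of a rank-$\le k-2$ matrix — so the identity is trivial.

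The only real obstacle is the clean description $\mathrm{adj}(M) = \det(N)\,\mathbf{1}\mathbf{1}^T$; its resolution rests on the observation that the left and right nullspaces of the rank-$(k-1)$ circulant $M$ are both spanned by $\mathbf{1}$, together with the lucky fact that the $(0,0)$ entry of $\mathrm{adj}(M)$ is by definition $\det(N)$.
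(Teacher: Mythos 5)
Your part (a) coincides with the paper's argument. For part (b) you take a genuinely different route. The paper also starts from the diagonalization $M = Q\,\mathrm{diag}(\lambda_0,\dots,\lambda_{k-1})\,Q^{-1}$ with $Q$ built from the normalized eigenvectors, but then expresses the minor matrix $N$ as $Q_2\,\mathrm{diag}(\lambda_1,\dots,\lambda_{k-1})\,Q_2'$ for explicit $(k-1)\times(k-1)$ blocks $Q_2$, $Q_2'$ of $Q$ and $Q^{-1}$, and evaluates $\det(Q_2Q_2')=1/k$ by recognizing $Q_2Q_2'$ as the circulant with first row $((k-1)/k,-1/k,\dots,-1/k)$ and multiplying its eigenvalues. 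You instead perturb: $\det(M+tJ)=tk\prod_{i\ge 1}\lambda_i$ from the circulant eigenvalues on one hand, and $\det(M+tJ)=t\,\mathbf{1}^{T}\mathrm{adj}(M)\,\mathbf{1}$ from the matrix determinant lemma on the other, then pin down $\mathrm{adj}(M)=\det(N)\,\mathbf{1}\mathbf{1}^{T}$ in the rank-$(k-1)$ case from $M\,\mathrm{adj}(M)=\mathrm{adj}(M)\,M=0$ together with the fact that both nullspaces of $M$ are spanned by $\mathbf{1}$ (all row and column sums equal $\sum a_i=0$). All steps check out, including the degenerate case $\mathrm{rank}(M)\le k-2$, where both sides vanish (note $M$ is diagonalizable, so its rank equals the number of nonzero $\lambda_i$). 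What the paper's computation buys is uniformity: the identity $\det(N)=\det(Q_2Q_2')\prod_{i\ge 1}\lambda_i$ needs no case split on the rank. What yours buys is that it avoids explicit manipulation of the DFT submatrices, at the price of invoking the adjugate identities and handling the degenerate rank separately. One small misattribution: $\sum_{j=0}^{k-1}\zeta_k^{ij}=0$ for $1\le i\le k-1$ holds for every $k$, not only prime $k$; primality is genuinely used only where you first invoke it, namely in the identity $N(f(\zeta_k))=\prod_{i=1}^{k-1}f(\zeta_k^{i})$.
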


\begin{proof}
For each $0\leq i \leq k-1$, define a column vector
$$X_i=\frac{1}{\sqrt{k}}(1,\zeta_k^i,\zeta_k^{2i},...,\zeta_k^{(k-1)i})^T.$$
By Result \ref{cir}, the eigenvalues of $M$ are $\lambda_i=f(\zeta_k^i)$ and the corresponding eigenvectors are $X_i$, $0\leq i \leq k-1$. Since $k$ is a prime, we have
\begin{equation} \label{norm prime}
N(f(\zeta_k))=\prod_{i=1}^{k-1}f(\zeta_k^i)=\prod_{i=1}^{k-1} \lambda_i.
\end{equation}
Note that $\det(M)=\prod_{i=0}^{k-1}\lambda_i$. If $\lambda_0=\sum_{i=0}^{k-1} a_i \neq 0$, then (\ref{bound for norm of alpha 2}) is clear.

\medskip

 Suppose that $\lambda_0=0$. Note that $X_i^*X_j=1$ if $i=j$ and $X_i^*X_j=0$ if $i \neq j$. 
 Let $Q$ be the $k\times k$ matrix with columns $X_0,...,X_{k-1}$, 
 then $Q^{-1}$ is the $k\times k$ matrix with rows $X_0^*,\dots,X_{k-1}^*$. We have 
\begin{equation*} \label{diagonalization of M}
M=Q\begin{pmatrix}
\lambda_0\\
&\lambda_1\\
&&\ddots\\
&&&\lambda_{k-1}
\end{pmatrix}Q^{-1}.
\end{equation*}
By the definition of $N$, 
\begin{equation*} \label{form of N}
N=Q_1\begin{pmatrix}
\lambda_0\\
&\lambda_1\\
&&\ddots\\
&&&\lambda_{k-1}
\end{pmatrix}Q_1^{'},
\end{equation*}
where $Q_1$ is the $(k-1)\times k$ matrix formed by the last $k-1$ rows of $Q$ and $Q_1^{'}$ is the $k\times (k-1)$ matrix formed by the last $k-1$ columns of $Q^{-1}$. Since $\lambda_0=0$, we have
$$N=Q_2 \begin{pmatrix}
\lambda_1\\
&&\ddots\\
&&&\lambda_{k-1}
\end{pmatrix}Q_2^{'},$$
where $Q_2$ is the $(k-1)\times (k-1)$ matrix formed by the last $k-1$ columns of $Q_1$ and $Q_2{'}$ is the matrix formed by the last $k-1$ rows of $Q'_1$. We obtain
$$\det(N)=\det(Q_2Q_2^{'})\prod_{i=1}^{k-1} \lambda_i.$$
By (\ref{norm prime}), the equation (\ref{bound for norm of alpha 3}) is equivalent to $\det(Q_2Q_2^{'})=1/k$. Note that  $(Q_2^{'})_{ij}=\overline{(Q_2)}_{ij}$ for any $i,j$, as $Q_2$ and $Q_2^{'}$ are submatrices of $Q$ and $Q^{-1}$, respectively. More precisely, we have
$$Q_2=\frac{1}{\sqrt{k}}\begin{pmatrix}
\zeta_k & \zeta_k^2 & \cdots & \zeta_k^{k-1} \\
\zeta_k^2 & \zeta_k^4 & \cdots & \zeta_k^{2(k-1)} \\
&&\ddots \\
\zeta_k^{k-1} & \zeta_k^{2(k-1)} & \cdots & \zeta_k^{(k-1)(k-1)} 
\end{pmatrix}.$$
The $(i,j)$th entry of $Q_2Q_2^{'}$ is 
$$\frac{1}{k}\sum_{t=1}^{k-1}\zeta_k^{(i-j)t}=\begin{cases} (k-1)/k \ \text{if} \ i=j\\
-1/k \ \text{if} \ i \neq j
\end{cases}.
$$
Hence $Q_2Q_2^{'}$ is a circulant matrix of size $(k-1)\times (k-1)$ with the first row is $((k-1)/k, -1/k, ...,-1/k)$. By Result \ref{cir}, the eigenvalues of $Q_2Q_2^{'}$ are 
$$\beta_j=\frac{1}{k}(k-1-\sum_{i=1}^{k-2}\zeta_{k-1}^{ij})=\begin{cases} 1/k \ \ \text{if} \ \ j=0, \\ 1 \ \ \text{if} \ \ 1 \leq j \leq k-2. \end{cases}$$
We obtain
 $$\det(Q_2Q_2^{'})=\prod_{j=0}^{k-2}\beta_j=1/k.$$
\end{proof}

Combining Result \ref{schinzel bound} and Proposition \ref{bound for norm of a cyclotomic integer 2},
we get the following norm bound, which in numerous cases is stronger 
than Theorem \ref{bound for norm of an algebraic integer}.

\begin{cor} \label{secondnormbound}
Let $k$ be a prime and let $f(x)=\sum_{i=0}^{k-1}a_ix^i \in \mathbb{Z}[x]$.
Write
$A^{+}= \sum_{j=0}^{n-1}\max\{0,a_j\}$, $A^{-} = \sum_{j=0}^{n-1}\max\{0, a_{j}\}$,
and $A=\max\{A^+,A^-\}$. 

\medskip 

\noindent
(a) 
If $\sum_{i=0}^{k-1}a_i=0$, then 
$$|N(f(\zeta_k))| \leq kA^{k-1}.$$
(b)
If $\sum_{i=0}^{k-1}a_i\neq 0$, then 
$$|N(f(\zeta_k))| \leq \frac{A^{k}} {\sum_{i=0}^{k-1}a_i}.$$
\end{cor}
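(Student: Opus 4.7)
The plan is to read off both bounds directly from the preceding two results: Proposition \ref{bound for norm of a cyclotomic integer 2} rewrites $N(f(\zeta_k))$ as a determinant (of $M$ or of $N$), and Result \ref{schinzel bound} then bounds that determinant in terms of the row sums $A^+$ and $A^-$. So the whole argument reduces to computing the quantities $N_i^+$ and $N_i^-$ for the two matrices at hand.

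For part (b), I would observe that $M$ is circulant, so each of its $k$ rows is a cyclic permutation of $(a_0,\dots,a_{k-1})$. In particular, every row $i$ satisfies $N_i^+ = A^+$ and $N_i^- = A^-$, so $\max\{N_i^+,N_i^-\} = A$ for all $i$. Applying Result \ref{schinzel bound} to $M$ then yields $|\det(M)| \leq A^k$, and part~(a) of Proposition \ref{bound for norm of a cyclotomic integer 2} gives
$$|N(f(\zeta_k))| \;=\; \frac{|\det(M)|}{|\sum_{i=0}^{k-1} a_i|} \;\leq\; \frac{A^k}{\sum_{i=0}^{k-1} a_i},$$
assuming the denominator is positive (otherwise replace $f$ by $-f$, which changes neither $A$ nor $|N(f(\zeta_k))|$).

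For part (a), the hypothesis $\sum a_i = 0$ forces $A^+ = A^-$, so $A = A^+ = A^-$. Here Proposition \ref{bound for norm of a cyclotomic integer 2}(b) gives $|N(f(\zeta_k))| = k\,|\det(N)|$, where $N$ is obtained from $M$ by deleting the first row and the first column. Each row of $N$ is a row of $M$ with one entry removed; deleting an entry can only shrink $N_i^+$ and $N_i^-$, so $\max\{N_i^+,N_i^-\} \leq A$ for every row of $N$. Result \ref{schinzel bound} then yields $|\det(N)| \leq A^{k-1}$, and hence $|N(f(\zeta_k))| \leq kA^{k-1}$.

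There is no real obstacle here; the proof is essentially a two-line assembly once Proposition \ref{bound for norm of a cyclotomic integer 2} and Result \ref{schinzel bound} are in place. The only items worth writing down explicitly are the circulant structure of $M$ (which makes every row identical from the point of view of $A^\pm$) and the observation that truncating a row can only decrease $N_i^\pm$. Primality of $k$ is used only implicitly, through its role in Proposition \ref{bound for norm of a cyclotomic integer 2}.
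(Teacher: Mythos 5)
Your proof is correct and follows exactly the route the paper intends: the paper gives no written proof, stating only that the corollary follows by combining Result \ref{schinzel bound} with Proposition \ref{bound for norm of a cyclotomic integer 2}, and you supply precisely the missing details (each row of the circulant $M$ has $\max\{N_i^+,N_i^-\}=A$, deleting an entry only shrinks these quantities for $N$, and $\sum a_i=0$ forces $A^+=A^-$). Your observation that the denominator in (b) should really be $|\sum_{i=0}^{k-1}a_i|$, handled by replacing $f$ with $-f$, correctly fixes a small imprecision in the statement as printed.
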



\section{Equations over {\boldmath $\mathbb{F}_q$} and {\boldmath $\mathbb{C}$} }
The following theorem shows that under some condition on the characteristic of the 
finite field  $\mathbb{F}_q$,
we can transform certain equations over $\mathbb{F}_q$ 
to equations over the field of complex numbers $\mathbb{C}$, and vice versa.

\begin{thm} \label{equivalence of equations over two fields}
Let $q$ be a power of a prime $p$ and let $e,k$ be 
nontrivial divisors of $q-1$ such that $q=ek+1$. 
Let $g$ be a primitive element of $\mathbb{F}_q$ and let $f(x)=\sum_{i=0}^{k-1}a_ix^i \in \mathbb{Z}[x]$. Suppose that
\begin{equation} \label{necessary conditions for equivalences of equations}
p > \left( \frac{k}{\varphi(k)}\sum_{i=0}^{k-1}a_i^2 \right)^{\frac{\varphi(k)}{2{\ord_k(p)}}},
\end{equation}
then $f(g^e)=0$ over $\mathbb{F}_q$ if and only if $f(\zeta_k)=0$ over $\mathbb{C}$.

\medskip

In particular, the same conclusion holds if $\sum_{i=0}^{k-1}a_i^2 \geq 3$ and 
\begin{equation} \label{necessary resulting}
p>\left(\sum_{i=0}^{k-1} a_i^2 \right)^{\frac{k}{2\ord_k(p)}}.
\end{equation}
\end{thm}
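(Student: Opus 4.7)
The plan is to handle the two implications separately. The forward direction, $f(\zeta_k)=0 \Rightarrow f(g^e)=0$, needs no hypothesis on $p$: if $f(\zeta_k)=0$ then the cyclotomic polynomial $\Phi_k(x) \in \Z[x]$ divides $f(x)$ in $\Z[x]$ (because $\Phi_k$ is monic), and reducing mod $p$ and evaluating at $g^e \in \F_q^*$, which has multiplicative order exactly $k$, gives $\Phi_k(g^e)=0$ and hence $f(g^e)=0$ in $\F_q$.

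For the reverse direction, set $\alpha=f(\zeta_k) \in \Z[\zeta_k]$. The strategy is to produce a ring homomorphism $\psi\colon \Z[\zeta_k]\to \F_q$ with $\psi(\zeta_k)=g^e$ and then to use Theorem \ref{bound for norm of an algebraic integer} to force $\alpha=0$. To construct $\psi$, I would observe that the map $\Z[x]\to \F_q$, $x\mapsto g^e$, has kernel containing $\Phi_k(x)$ (again because $g^e$ has order $k$), so it factors through $\Z[\zeta_k]$. The resulting kernel $\mathfrak{p}=\ker \psi$ is a prime of $\Z[\zeta_k]$ above $p$; since $\gcd(p,k)=1$ and $k \mid q-1$ (so $\ord_k(p) \mid [\F_q:\F_p]$), the residue field $\Z[\zeta_k]/\mathfrak{p}$ embeds in $\F_q$ and has size $p^{\ord_k(p)}$, giving $N(\mathfrak{p})=p^{\ord_k(p)}$.

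Now, assuming $f(g^e)=0$ in $\F_q$, I would compute $\psi(\alpha)=f(\psi(\zeta_k))=f(g^e)=0$, so $\mathfrak{p} \mid (\alpha)$ and therefore $p^{\ord_k(p)}=N(\mathfrak{p})$ divides $|N(\alpha)|$ in $\Z$ whenever $\alpha \neq 0$. Combining this with the bound
$$|N(\alpha)| \leq \left(\frac{k}{\varphi(k)}\sum_{i=0}^{k-1} a_i^2\right)^{\varphi(k)/2}$$
from Theorem \ref{bound for norm of an algebraic integer} and hypothesis (\ref{necessary conditions for equivalences of equations}) (raised to the $\ord_k(p)$-th power), one gets the strict inequality $p^{\ord_k(p)}>|N(\alpha)|$, contradicting $\alpha \neq 0$. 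Hence $\alpha=0$, i.e.\ $f(\zeta_k)=0$. The ``In particular'' clause follows verbatim with bound (\ref{bound for norm - resulting inequality}) and hypothesis (\ref{necessary resulting}) replacing the stronger pair.

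The main obstacle is the clean construction of $\psi$ together with the identification $N(\mathfrak{p})=p^{\ord_k(p)}$; these are standard facts about the splitting of $p$ in $\Z[\zeta_k]$ when $p \nmid k$, but they are what drives the whole equivalence. Once they are in place, the proof reduces to a one-line comparison between the upper bound on $|N(\alpha)|$ coming from the coefficients of $f$ and the lower bound coming from divisibility by $\mathfrak{p}$.
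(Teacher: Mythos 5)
Your proof is correct and follows essentially the same route as the paper: reduce modulo a prime $\mathfrak{p}$ of $\mathbb{Z}[\zeta_k]$ above $p$ with residue field of size $p^{\ord_k(p)}$, and compare $N(\mathfrak{p})=p^{\ord_k(p)}$ against the norm bound of Theorem \ref{bound for norm of an algebraic integer} to force $f(\zeta_k)=0$. The only (cosmetic) differences are that you build the homomorphism $\mathbb{Z}[\zeta_k]\to\mathbb{F}_q$ directly via $\zeta_k\mapsto g^e$, which avoids the paper's auxiliary exponent $j$ and the step $N(f(\zeta_k^j))=N(f(\zeta_k))$, and you handle the easy direction via $\Phi_k\mid f$ rather than by Galois conjugation.
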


\begin{proof}
Let $\mathfrak{p}$ be a prime ideal of $\mathbb{Z}[\zeta_k]$ that contains $p$. Write $q=p^n$ and $b=\ord_k(p)$. Note that $b$ divides $n$ because $q=p^n \equiv 1 \pmod{k}$.  Since $\mathbb{Z}[\zeta_k]/{\mathfrak{p}}$ is a finite field extension of $\mathbb{Z}/p\mathbb{Z}$ of order $b$, we have $\mathbb{Z}[\zeta_k]/{\mathfrak{p}} \cong \mathbb{F}_{p^b}$. Let $\phi: \mathbb{F}_{p^b} \rightarrow \mathbb{Z}[\zeta_k]/{\mathfrak{p}}$ be an isomorphism. Note that $g^e$ is a primitive $k$th root of unity in $F_{p^b}$, so $\phi(g^e)$ is also a primitive $k$th root of unity in $\mathbb{Z}[\zeta_k]/ \mathfrak{p}$, which implies $\phi(g^e)=\zeta_k^j+\mathfrak{p}$ for some integer $j$ coprime to $k$. We have
\begin{equation} \label{equivalence equation}
f(g^e)=0 \ \text{over} \ \mathbb{F}_q \Leftrightarrow \phi(f(g^e))=f(\zeta_k^j)+\mathfrak{p}=0 \ \text{in} \ \mathbb{Z}[\zeta_k]/\mathfrak{p} \Leftrightarrow f(\zeta_k^j) \in \mathfrak{p}.
\end{equation}
Suppose that $f(\zeta_k)=0$ over $\mathbb{C}$. We have $f(\zeta_k^j)=0$, as $j$ is coprime to $k$. By (\ref{equivalence equation}), $f(g^e)=0$ over $\mathbb{F}_q$. Now assume that $f(g^e)=0$ over $\mathbb{F}_q$. Note that $N(\mathfrak{p})=p^b$, where by $N(\mathfrak{p})$ we mean the norm of the ideal $\mathfrak{p}$ in $\mathbb{Z}[\zeta_k]$. By (\ref{equivalence equation}), we have $N(f(\zeta_k^j)) \equiv 0 \pmod{p^b}$. As $j$ is coprime to $k$, we have $N(f(\zeta_k^j))=N(f(\zeta_k))$. Thus
\begin{equation} \label{congruence for norm f(zetak)}
N(f(\zeta_k)) \equiv 0 \pmod{p^b}.
\end{equation}
On the other hand, by Theorem \ref{bound for norm of an algebraic integer} we have
\begin{equation} \label{theorem 1}
|N(f(\zeta_k))| \leq \left( \frac{k}{\varphi(k)}\sum_{i=0}^{k-1}a_i^2\right)^{\varphi(k)/2}.
\end{equation}
If $f(\zeta_k) \neq 0$, then $N(f(\zeta_k)) \neq 0$ and (\ref{congruence for norm f(zetak)}), (\ref{theorem 1}) imply
$$p^b \leq \left( \frac{k}{\varphi(k)}\sum_{i=0}^{k-1}a_i^2\right)^{\varphi(k)/2},$$
contradicting (\ref{necessary conditions for equivalences of equations}). Therefore, $f(\zeta_k)=0$. \\

Lastly, the conclusion for the case $\sum_{i=0}^{k-1}a_i^2\geq 3$ follows from (\ref{bound for norm - resulting inequality}).
\end{proof}

\medskip

The next theorem follows from Corollary \ref{secondnormbound} in the same
way as Theorem \ref{equivalence of equations over two fields} follows from Theorem \ref{bound for norm of an algebraic integer},
so we skip the proof.

\begin{thm} \label{equivalence of equations over two fields 2}
Let $q$ be a power of a prime $p$ and let $e,k$ be 
nontrivial divisors of $q-1$ such that $q=ek+1$ and $k$ is a prime.
Let $g$ be a primitive element of $\mathbb{F}_q$ and let $f(x)=\sum_{i=0}^{k-1}a_ix^i \in \mathbb{Z}[x]$.
Write
$A^{+}= \sum_{j=0}^{n-1}\max\{0,a_j\}$, $A^{-} = \sum_{j=0}^{n-1}\max\{0, a_{j}\}$,
and $A=\max\{A^+,A^-\}$. 
Suppose that one of the following conditions holds.

\noindent \item[(a)] $\sum_{i=0}^{k-1}a_i=0$ and
\begin{equation} \label{necessary 3rd condition for equivalences of equations}
p^{\ord_k(p)}> kA^{k-1}.
\end{equation}

\noindent \item[(b)] $\sum_{i=0}^{k-1} a_i \neq 0$ and 
\begin{equation} \label{necessary 2nd condition for equivalences of equations}
p^{\ord_k(p)}> \frac{A^k}{|\sum_{i=0}^{k-1}a_i|}.
\end{equation}
Then we have $f(g^e)=0$ over $\mathbb{F}_q$ if and only if $f(\zeta_k)=0$ over $\mathbb{C}$.
\end{thm}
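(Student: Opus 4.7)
The plan is to mimic the proof of Theorem \ref{equivalence of equations over two fields} line by line, simply replacing the appeal to Theorem \ref{bound for norm of an algebraic integer} by the sharper bounds from Corollary \ref{secondnormbound}. So I would pick a prime ideal $\mathfrak{p}$ of $\mathbb{Z}[\zeta_k]$ containing $p$, write $b=\ord_k(p)$, and use that $\mathbb{Z}[\zeta_k]/\mathfrak{p}\cong \mathbb{F}_{p^b}$ (with $b\mid n$, where $q=p^n$). An isomorphism $\phi:\mathbb{F}_{p^b}\to \mathbb{Z}[\zeta_k]/\mathfrak{p}$ sends the primitive $k$th root of unity $g^e\in \mathbb{F}_q$ to $\zeta_k^j+\mathfrak{p}$ for some $j$ coprime to $k$, giving the equivalence
\[
f(g^e)=0 \text{ in } \mathbb{F}_q \iff f(\zeta_k^j)\in \mathfrak{p}.
\]

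For the easy direction, if $f(\zeta_k)=0$ in $\mathbb{C}$, then since $k$ is prime and hence all primitive $k$th roots of unity are Galois conjugates of $\zeta_k$, we automatically get $f(\zeta_k^j)=0$, hence $f(\zeta_k^j)\in \mathfrak{p}$, hence $f(g^e)=0$ in $\mathbb{F}_q$.

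For the harder direction, suppose $f(g^e)=0$ in $\mathbb{F}_q$. Then $f(\zeta_k^j)\in\mathfrak{p}$, so $N(f(\zeta_k^j))\equiv 0\pmod{p^b}$ since $N(\mathfrak{p})=p^b$. Because $\gcd(j,k)=1$ we have $N(f(\zeta_k^j))=N(f(\zeta_k))$, so $N(f(\zeta_k))\equiv 0\pmod{p^b}$. I would then argue by contradiction: assuming $f(\zeta_k)\neq 0$, the norm $N(f(\zeta_k))$ is a nonzero integer divisible by $p^b$, so $p^b\leq |N(f(\zeta_k))|$. In case (a), Corollary \ref{secondnormbound}(a) gives $|N(f(\zeta_k))|\leq kA^{k-1}$, contradicting (\ref{necessary 3rd condition for equivalences of equations}); in case (b), Corollary \ref{secondnormbound}(b) gives $|N(f(\zeta_k))|\leq A^{k}/|\sum a_i|$, contradicting (\ref{necessary 2nd condition for equivalences of equations}). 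Hence $f(\zeta_k)=0$.

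There is no genuine obstacle here since the structure of the argument is already laid out in the proof of Theorem \ref{equivalence of equations over two fields}; the only thing to be slightly careful about is that the step ``$f(\zeta_k)=0$ implies $f(\zeta_k^j)=0$'' uses that $k$ is prime (so every $\zeta_k^j$ with $1\leq j\leq k-1$ is a Galois conjugate of $\zeta_k$), which is an explicit hypothesis in this theorem and so causes no trouble. The proof is therefore routine once Corollary \ref{secondnormbound} is available, which is presumably why the authors skip it.
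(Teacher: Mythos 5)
Your proposal is correct and is exactly the argument the paper intends: the authors explicitly skip the proof, stating that it follows from Corollary \ref{secondnormbound} in the same way that Theorem \ref{equivalence of equations over two fields} follows from Theorem \ref{bound for norm of an algebraic integer}, and you have carried out precisely that substitution. One tiny quibble: the step ``$f(\zeta_k)=0$ implies $f(\zeta_k^j)=0$'' needs only $\gcd(j,k)=1$ (as in the original proof, where $k$ is not assumed prime), so primality of $k$ is not actually what makes that step work---but this does not affect the validity of your argument.
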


\bigskip

\section{Upper Bounds for Cyclotomic Numbers}
In this section, we apply Theorem \ref{equivalence of equations over two fields} to derive upper 
bounds for cyclotomic numbers $(a,b)$. 
In Theorem \ref{bound for norm of an algebraic integer}, 
the upper bound $(k/\varphi(k) \sum a_i^2)^{\varphi(k)/2}$ is largest when $\varphi(k)$ is approximately $k$.
 Thus, in this case, an improved bound is desirable and, in particular, when $k$ is a prime.
Theorem \ref{equivalence of equations over two fields 2} 
will come into play in this situation and we will discuss this case separately in the last section. 

\medskip

Note that $(a,b)=(a',b')$ whenever $a\equiv a' \pmod{e}$ and $b\equiv b' \pmod{e}$. From now on, we always assume that $a,b \in \{0,1,\dots,e-1\}$. First, we recall the main result of this section.

\begin{main1} \label{general}
Let $q$ be a power of a prime $p$. Let $e$ and $k$ be nontrivial divisors of $q-1$ such that $q=ek+1$. If
\begin{equation} \label{bound pk}
p>\left(\sqrt{14}\right)^{k/\ord_k(p)},
\end{equation}
then 
\begin{equation} \label{value (a,b)}
(a,b)\leq 3 \ \ \text{for all} \ \ a,b\in \mathbb{Z}.
\end{equation}
\end{main1}

Our proof for this theorem is divided into five cases: We separately
investigate cyclotomic numbers 
 $(0,0), (0,a), (a,0), (a,a)$ and $(a,b)$ where $a\neq b$ and $a,b \in \{1,\dots,e-1\}$. 
In fact,  in each case, we obtain a stronger result than Main Theorem 1, which is just a simplified
consequence of the analysis of the different cases.

\medskip

\begin{thm} \label{the number (0,0)}
If 
\begin{equation} \label{necessary conditions for (0,0)}
p> \left( \frac{3k}{\varphi(k)} \right)^{\frac{\varphi(k)}{2\ord_k(p)}},
\end{equation}
then 
\begin{equation} \label{result for (0,0)}
(0,0)=\begin{cases} 0 \ \text{if} \ k \not\equiv 0 \pmod{6} \ \text{and} \ 2 \not\in C_0, \\
1 \ \text{if} \ k \not\equiv 0 \pmod{6} \ \text{and} \ 2 \in C_0, \\
2 \ \text{if} \ k \equiv 0 \pmod{6} \ \text{and} \ 2 \not\in C_0, \\
3 \ \text{if} \ k \equiv 0 \pmod{6} \ \text{and} \ 2 \in C_0.
\end{cases}
\end{equation}
\end{thm}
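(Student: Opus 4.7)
The plan is to enumerate pairs $(r,s) \in \{0,\dots,k-1\}^2$ satisfying $1 + g^{re} = g^{se}$ by splitting on whether $r$ and $s$ equal zero. If $s = 0$ or $r = s$, the equation forces $g^{re} = 0$ or $1 = 0$ in $\mathbb{F}_q$, so there are no solutions. If $r = 0$ and $s > 0$, the equation reduces to $g^{se} = 2$, contributing exactly one solution when $2 \in C_0$ (since $g^0 = 1 \ne 2$ rules out $s=0$) and none otherwise. The substantive case is $0 < r, s < k$ with $r \ne s$.

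For this substantive case, I set $f(x) = 1 + x^r - x^s \in \mathbb{Z}[x]$, so that $\sum_{i} a_i^2 = 3$. With this choice, condition (\ref{necessary conditions for equivalences of equations}) of Theorem \ref{equivalence of equations over two fields} becomes precisely hypothesis (\ref{necessary conditions for (0,0)}) of the present theorem. Applying Theorem \ref{equivalence of equations over two fields} therefore transfers the question to the complex numbers: such a pair $(r,s)$ is a solution over $\mathbb{F}_q$ if and only if $1 + \zeta_k^r = \zeta_k^s$ holds in $\mathbb{C}$.

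To conclude, I classify the complex solutions. Taking absolute values in $1 + \zeta_k^r = \zeta_k^s$ gives $|1 + \zeta_k^r| = 1$, which forces $\cos(2\pi r/k) = -\tfrac{1}{2}$ and hence $r/k \in \{1/3, 2/3\}$. Thus $3 \mid k$ is necessary, and $\zeta_k^r \in \{\zeta_3, \zeta_3^2\}$. For $r = k/3$, the equation becomes $\zeta_k^s = 1 + \zeta_3 = \zeta_6$, which requires $6 \mid k$ and forces $s = k/6$; similarly $r = 2k/3$ forces $s = 5k/6$ (and again needs $6 \mid k$). Hence this case contributes exactly two solutions when $6 \mid k$ and zero otherwise. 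Combining the two sources of solutions according to the independent dichotomies $6 \mid k$ and $2 \in C_0$ yields the four values in (\ref{result for (0,0)}). The only step requiring any real care is the absolute-value argument isolating $\zeta_3, \zeta_3^2$; it is short and obviates any appeal to the vanishing-sum classification of Section 2.
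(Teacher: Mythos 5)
Your proof is correct and follows the paper's overall strategy: split off the trivial pair $r=0$ (which contributes a solution exactly when $2\in C_0$), reduce the remaining case to the polynomial $f(x)=1+x^r-x^s$ with $\sum a_i^2=3$, and invoke Theorem \ref{equivalence of equations over two fields} under hypothesis (\ref{necessary conditions for (0,0)}) to transfer the equation to $\mathbb{C}$. The one genuine difference is the endgame: the paper classifies the solutions of $1+\zeta_k^a-\zeta_k^b=0$ by appealing to the Conway--Jones classification (Result \ref{vanishing sum of small length}), whereas you use the elementary observation that $|1+\zeta_k^r|=1$ forces $\cos(2\pi r/k)=-\tfrac12$, hence $\zeta_k^r\in\{\zeta_3,\zeta_3^2\}$ and $\zeta_k^s\in\{\zeta_6,\zeta_6^5\}$. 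Your argument is self-contained and avoids the vanishing-sum machinery, which is a small gain in this length-$3$ case (though that machinery is unavoidable for the longer sums appearing in the later theorems, so the paper's uniform treatment has its own logic). A further small merit of your write-up is that you use the biconditional in Theorem \ref{equivalence of equations over two fields} explicitly, which is what justifies the exact values (rather than mere upper bounds) claimed in (\ref{result for (0,0)}) when $6\mid k$; the paper leaves this direction implicit.
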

\begin{proof}
Suppose that there are $0 \leq a,b \leq k-1$ with $1+g^{ae}=g^{be}$. 
Then $2 \in C_0$ if $a=0$. Thus in the case $2 \in C_0$, there is one solution to $1+g^{ae}=g^{be}$ in which $a=0$.

From now on, suppose that $a \neq 0$ and $1+g^{ae}=g^{be}$. We have $b \not\in \{0,a\}$ and $f(x)=1+x^a-x^b$ is a polynomial of degree at most $k-1$ with two coefficients $1$, one coefficient $-1$ and all other coefficients $0$. Write $f(x)=\sum_{i=0}^{k-1} a_ix^i$, then $\sum_{i=0}^{k-1}a_i^2=3$ and $f(g^e)=0$. By (\ref{necessary conditions for (0,0)}) and Theorem \ref{equivalence of equations over two fields}, we have
$$f(\zeta_k)=1+\zeta_k^a-\zeta_k^b=0.$$
By Result \ref{vanishing sum of small length}, we obtain $1+\zeta_k^a-\zeta_k^b=1+\zeta_3+\zeta_3^2$, which happens only when $6 \mid k$ and $(a,b) \in \{(k/3,k/6),(2k/3,5k/6)\}$, proving (\ref{result for (0,0)}).
\end{proof}

\medskip

Note that by (\ref{necessary resulting}), Theorem \ref{the number (0,0)} still holds
when (\ref{necessary conditions for (0,0)}) is replaced by $p>3^{k/(2{\rm ord}_k(p))}$. 
This shows that Main Theorem 1 holds in the case $(a,b)=(0,0)$.

\medskip

We mentioned in the introduction that Vandiver has used cyclotomic numbers
to obtain results on Fermat's Last Theorem.
The next Corollary gives an example for this kind of argument.
Considering the Diophantine equation $x^e+y^e=z^e$ modulo $p$,
Theorem \ref{the number (0,0)}  implies the following.

\begin{cor} \label{fermat}
If $p$ is a prime with $p=ek+1>3^{k/2}$, 
then $x^e+y^e=z^e$ with $x,y, z \in \mathbb{Z}$ , implies either $2$ is an $e$th power modulo $p$ or $xyz\equiv 0 \pmod{p}$.
\end{cor}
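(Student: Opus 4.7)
The plan is to reduce the Fermat equation modulo $p$ and recast the reduction as a cyclotomic $(0,0)$-equation, then invoke Theorem \ref{the number (0,0)}.

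Assume $xyz\not\equiv 0\pmod{p}$, otherwise the conclusion is immediate. Then $y\in\mathbb{F}_p^{*}$; dividing the reduced equation by $y^e$ yields $1+d^e=c^e$ in $\mathbb{F}_p$ where $d=x/y$ and $c=z/y$. Since $d^e,c^e\in C_0$, this is an instance of (\ref{cyclotomic number (a,b)}) with $a=b=0$, so $(0,0)\geq 1$. Next, because $\ord_k(p)\geq 1$, we have $p>3^{k/2}\geq 3^{k/(2\ord_k(p))}$, which is the simplified hypothesis of Theorem \ref{the number (0,0)} noted immediately after its proof (via (\ref{necessary resulting})). Hence Theorem \ref{the number (0,0)} applies.

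When $k\not\equiv 0\pmod 6$, Theorem \ref{the number (0,0)} yields $(0,0)\in\{0,1\}$ with $(0,0)=1$ iff $2\in C_0$; combined with $(0,0)\geq 1$, this gives $2\in C_0$, which is the desired conclusion.

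The subtle case is $k\equiv 0\pmod 6$: Theorem \ref{the number (0,0)} already guarantees two exceptional $(0,0)$-solutions coming from primitive $3$rd and $6$th roots of unity in $\mathbb{F}_p$, so $(0,0)\geq 1$ alone does not pin down $2\in C_0$. The main obstacle is to handle this case; I would argue that if the Fermat-derived solution is distinct from both exceptional ones then $(0,0)\geq 3$, and since Theorem \ref{the number (0,0)} gives $(0,0)=3$ only when $2\in C_0$, the conclusion follows. Otherwise the Fermat-derived solution coincides with an exceptional one, and a symmetry argument (interchanging $x$ and $y$ in the Fermat equation produces a companion $(0,0)$-solution that must match the other exceptional solution) combined with the integer structure of $(x,y,z)$ collapses the configuration to the trivial solution $d^e=1$, forcing $c^e=2\in C_0$.
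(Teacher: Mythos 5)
Your reduction to Theorem \ref{the number (0,0)} is exactly the paper's (one-line) argument, and your treatment is correct and complete whenever $k\not\equiv 0\pmod 6$: the hypothesis $p>3^{k/2}\ge 3^{k/(2\ord_k(p))}$ activates the theorem via (\ref{necessary resulting}), a solution with $xyz\not\equiv 0\pmod p$ gives $(0,0)\ge 1$, and the theorem then forces $2\in C_0$. You also correctly flag that the case $6\mid k$ is not covered by this reasoning, a point the paper passes over in silence.

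Your proposed repair of the case $6\mid k$, however, contains a genuine gap. The two exceptional solutions of $1+u=v$ with $u,v\in C_0$ are $(u,v)=(\omega,-\omega^2)$ and $(\omega^2,-\omega)$, where $\omega=g^{ek/3}$ is a primitive cube root of unity. Interchanging $x$ and $y$ sends the Fermat-derived pair $(d^e,c^e)$ to $(d^{-e},c^ed^{-e})$, and one checks that this map simply exchanges the two exceptional solutions with each other; nothing ``collapses'' and no contradiction arises, so the final subcase of your argument does not go through. In fact no argument working only with the congruence $x^e+y^e\equiv z^e\pmod p$ can close this subcase: take $p=31$, $e=5$, $k=6$, so that $p=31>27=3^{k/2}$. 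Here $2^5\equiv 1\pmod{31}$, so $2$ has order $5$ and is not a fifth power, yet $1^5+19^5\equiv 26^5\pmod{31}$ (this is $1+5\equiv 6$, i.e.\ the exceptional solution $1+\omega=-\omega^2$ with $\omega=5$) and $31\nmid 1\cdot 19\cdot 26$. Thus either the corollary must be read with genuine integer equality and the subcase excluded by an argument that actually uses integrality (which neither you nor the paper supplies, and which would essentially require Fermat-type input), or the statement needs the additional hypothesis $k\not\equiv 0\pmod 6$. Your other observation in this case --- that a Fermat-derived solution distinct from both exceptional ones forces $(0,0)\ge 3$ and hence $2\in C_0$ --- is correct; it is only the remaining subcase that cannot be dismissed as you suggest.
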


For example, let $p=1301=100\cdot 13+1$ and let $e=100, k=13$. Note that $2$ is not a $100$th power modulo $1301$.
Therefore, if $x^{100}+y^{100}\equiv z^{100} \pmod{1301}$, then $xyz\equiv 0 \pmod{1301}$.

\medskip

\begin{thm} \label{result for (0,a)}
Let $a \in \{1,\dots,e-1\}$. If
\begin{equation} \label{necessary condition for (0,a)}
p>\left( \frac{4k}{\varphi(k)} \right)^{\frac{\varphi(k)}{2\ord_{k}(p)}} ,
\end{equation}
then 
\begin{equation} \label{conclusion for (0,a)}
(0,a) \leq \begin{cases} 3 \ \text{if} \ 2 \in C_a, \\
2 \ \text{if} \ 2 \not\in C_a.
\end{cases}
\end{equation}
\end{thm}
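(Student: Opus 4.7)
The plan is to mirror the proof of Theorem \ref{the number (0,0)}: from any pair of distinct solutions to the defining equation, construct a polynomial $f\in\Z[x]$ of small $\ell_2$-norm on its coefficients, invoke Theorem \ref{equivalence of equations over two fields} to pass from $\F_q$ to $\C$, and then extract a geometric rigidity statement. Given two distinct solutions $(r_1,s_1),(r_2,s_2)$ of $1+g^{re}=g^{a+se}$ with $r_1,r_2\in\{1,\dots,k-1\}$, dividing the two defining equations yields $1+g^{r_1 e}=g^{(s_1-s_2)e}(1+g^{r_2 e})$; rewriting this as $f(g^e)=0$ gives
\begin{equation*}
f(x)=x^{(s_1-s_2)\bmod k}+x^{(s_1-s_2+r_2)\bmod k}-1-x^{r_1}\in\Z[x].
\end{equation*}
In the generic case the four exponents are pairwise distinct, so $\sum a_i^2=4$, and the hypothesis (\ref{necessary condition for (0,a)}) is the precise threshold for Theorem \ref{equivalence of equations over two fields} to yield $f(\zeta_k)=0$ in $\C$.

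Rewriting that identity as $\zeta_k^{s_1-s_2}(1+\zeta_k^{r_2})=1+\zeta_k^{r_1}$ and taking absolute values forces $|1+\zeta_k^{r_1}|=|1+\zeta_k^{r_2}|$, and hence $r_1\equiv\pm r_2\pmod k$. Since distinct solutions have distinct first coordinates, I obtain the \emph{key relation}
\begin{equation*}
r_1+r_2\equiv 0\pmod k
\end{equation*}
for any two distinct solutions with $r_1,r_2\neq 0$. The combinatorial conclusion is then immediate: at most one solution has $r=0$ (and such a solution exists precisely when $2\in C_a$); among the remaining solutions, the key relation applied to any three of them would give $r_1\equiv -r_2\equiv -r_3\pmod k$, hence $r_2=r_3$, contradicting distinctness. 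Therefore at most two solutions have $r\neq 0$, yielding $(0,a)\leq 2$ when $2\notin C_a$ and $(0,a)\leq 3$ when $2\in C_a$.

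The one part requiring care is the degenerate case in which the four exponents defining $f$ are not pairwise distinct, which could enlarge some coefficient to $\pm 2$ and violate the hypothesis of Theorem \ref{equivalence of equations over two fields}, or could collapse $f$ to the zero polynomial altogether. I would rule out the coincidences $e_1=e_2$ and $e_3=e_4$ (they force $r_2=0$ and $r_1=0$) as well as $e_1=e_3$ (which forces $s_1=s_2$ and hence $r_1=r_2$, contradicting distinctness). A short case check shows that the only remaining coincidence pattern is the simultaneous $e_1=e_4$ and $e_2=e_3$, which unwinds to the two relations $s_1-s_2\equiv r_1$ and $s_1-s_2+r_2\equiv 0\pmod k$; these add to $r_1+r_2\equiv 0\pmod k$ directly, so the key relation holds in the degenerate case as well and the argument goes through without invoking Theorem \ref{equivalence of equations over two fields} there.
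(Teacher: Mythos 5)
Your proof is correct, and its skeleton --- form a four-term polynomial from two distinct solutions, check that its exponents are pairwise distinct or fully collapse, and invoke Theorem \ref{equivalence of equations over two fields} with $\sum a_i^2=4$ under exactly the hypothesis (\ref{necessary condition for (0,a)}) --- coincides with the paper's. The genuine difference lies in how the identity over $\C$ is exploited and in the combinatorial endgame. The paper first uses the involution $(i,j)\mapsto(-i,j-i)$ on solutions to select two solutions \emph{not} related by it, shows the four exponents are then automatically distinct, and derives a contradiction from Result \ref{vanishing sum of small length}: the length-four vanishing sum would have to cancel in pairs, which is incompatible with the choice of solutions. You instead establish, for \emph{every} pair of distinct solutions with nonzero first coordinates, the relation $r_1+r_2\equiv 0\pmod{k}$, extracting it from $\zeta_k^{s_1-s_2}\bigl(1+\zeta_k^{r_2}\bigr)=1+\zeta_k^{r_1}$ by taking absolute values (since $|1+\zeta_k^{r}|^2=2+2\cos(2\pi r/k)$ determines $r$ up to sign), and then count: a third solution with nonzero first coordinate would force two of the $r_t$ to coincide, while solutions are determined by their first coordinates. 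Your absolute-value step is more elementary and bypasses the Conway--Jones classification entirely for this case, whereas the paper's route via Result \ref{vanishing sum of small length} is the one that scales to the length-six sums needed for Theorem \ref{result of (a,b)}; the two arguments are otherwise equivalent, as your key relation says precisely that any two such solutions lie in a common orbit of the paper's involution. Your analysis of the degenerate coincidence pattern is also sound (only $e_1=e_4$ together with $e_2=e_3$ can occur, since a surviving two-term relation $g^{ue}=g^{ve}$ with $u\not\equiv v\pmod{k}$ is impossible over $\F_q$), and it is worth noting that this degenerate branch is exactly the case of two involution-related solutions, so your ``generic'' branch is in fact vacuous --- which does not affect the validity of the argument.
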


\begin{proof}
Note that $1+g^{ie}=g^{je+a}$ implies $1+g^{-ie}=g^{(j-i)e+a}$, so each solution $(i,j)$ to $1+g^{ie}=g^{je+a}$ induces a solution $(-i,j-i)$ (calculation is modulo $k$) to the same equation, two of which are different if and only if $i \neq 0$. Moreover if $i=0$, then $2=g^{je+a}\in C_a$ and there is one solution to $1+g^{ie}=g^{je+a}$ in which $i=0$.

Suppose that $2 \in C_a$ and $(0,a) \geq 4$. There are two different pairs $(i_1,j_1), (i_2,j_2)$ with 
\begin{equation} \label{condition}
i_1\neq 0, \ i_2 \neq 0, \ (i_2,j_2) \neq (-i_1,j_1-i_1) \ \text{and} \ (i_1,j_1)\neq (-i_2,j_2-i_2)
\end{equation} 
such that $1+g^{i_1e}=g^{j_1e+a}$ and $1+g^{i_2e}=g^{j_2e+a}$. We obtain
\begin{equation} \label{key equation for (0,a)}
1+g^{i_1e}-g^{(j_1-j_2)e}-g^{(j_1-j_2+i_2)e}=0.
\end{equation}
In (\ref{key equation for (0,a)}), the numbers $0, i_1, j_1-j_2$ and $j_1-j_2+i_2$ are pairwise different. Indeed, by (\ref{condition}), we need only to show that $i_1 \neq j_1-j_2+i_2$. If $i_1-i_2=j_1-j_2$, then by subtracting two equations $1+g^{i_1e}=g^{j_1e+a}$ and $1+g^{i_2e}=g^{j_2e+a}$, we obtain $g^{i_2e}=g^{j_2e+a}$, a contradiction as $C_0\cap C_a=\emptyset$.

By (\ref{necessary condition for (0,a)}) and Theorem \ref{equivalence of equations over two fields}, the equation (\ref{key equation for (0,a)}) implies
$$1+\zeta_k^{i_1}-\zeta_k^{j_1-j_2}-\zeta_k^{j_1-j_2+i_2}=0.$$
By Result \ref{vanishing sum of small length}, this is possible only when the sum on left-hand side sum cancels in pairs. This happens only when ``$2 \mid k$ and $i_1=i_2=k/2$" or ``$j_1=j_2$ and $i_1=i_2$", both of which are not possible. Therefore, we obtain $(0,a) \leq 3$ if $2 \in C_a$.

Next, suppose that $2 \not\in C_a$ and $(0,a)\geq 3$. Note that for any $0\leq i,j\leq k-1$ with $1+g^{ie}=g^{je+a}$, we have $i \neq 0$. There exist two pairs $(i_1,j_1)$ and $(i_2,j_2)$ with
$$i_1\neq 0, \ i_2 \neq 0, \ (i_2,j_2) \neq (-i_1,j_1-i_1) \ \text{and} \ (i_1,j_1)\neq (-i_2,j_2-i_2)$$
 such that $1+g^{i_1e}=g^{j_1e+a}$ and $1+g^{i_2e}=g^{j_2e+a}$. We obtain a contradiction by the same argument as in the previous case. 
\end{proof}

\medskip

\begin{thm} \label{result for (a,0)}
Let $a\in \{1,\dots,e-1\}$. If
\begin{equation} \label{necessary for (a,0)}
p>\left( \frac{4k}{\varphi(k)} \right)^\frac{\varphi(k)}{2\ord_k(p)},
\end{equation}
then
\begin{equation} \label{conclusion for (a,0)}
(a,0) \leq \begin{cases} 3 \ \text{if} \ 2 \in C_a \ \text{and} \ 2\mid k,\\
2 \ \text{if} \ 2 \not\in C_a \ \text{and} \ 2 \mid k ,\\
2 \ \text{if} \ 2 \nmid k .
\end{cases}
\end{equation}
\end{thm}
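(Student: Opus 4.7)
The plan is to reduce to $2 \nmid k$ via the symmetry $(a,0) = (0,a)$, and in that case derive a contradiction from the assumption of three distinct solutions by combining Theorem \ref{equivalence of equations over two fields} with the structure of short vanishing sums of roots of unity (Result \ref{vanishing sum of small length}). Indeed, if $2 \mid k$ then $-1 = g^{ek/2} \in C_0$, so the involution $(x,y) \mapsto (-y,-x)$ bijects solutions of $1+x=y$ with $x \in C_a,\ y \in C_0$ to those with the classes swapped, giving $(a,0) = (0,a)$ and $2 \in C_a \Leftrightarrow -2 \in C_a$; the two $2 \mid k$ sub-cases of (\ref{conclusion for (a,0)}) then follow directly from Theorem \ref{result for (0,a)}.

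Now assume $2 \nmid k$ and, for contradiction, take three distinct solutions $(r_i, s_i)$, $i \in \{1,2,3\}$, so each $E_i \colon 1 + g^{a + r_i e} = g^{s_i e}$ holds. For a pair $i \neq j$, eliminating $g^a$ via $g^{r_j e} E_i - g^{r_i e} E_j$ gives
$$f_{ij}(g^e) := g^{r_j e} - g^{r_i e} - g^{(r_j + s_i)e} + g^{(r_i + s_j)e} = 0.$$
Of the possible equalities among the four exponents modulo $k$, most force $r_i = r_j$ or $s_i = 0$ or $s_j = 0$, all impossible; the only benign coincidences are (A) $s_i \equiv r_i - r_j$ or (B) $s_j \equiv r_j - r_i \pmod k$, and both together force $r_i = r_j$ as well (using $p \neq 2$). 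If neither (A) nor (B) holds for some pair, then $f_{ij}$ has four distinct $\pm 1$-monomials, so $\sum a_\ell^2 = 4$; by hypothesis (\ref{necessary for (a,0)}) Theorem \ref{equivalence of equations over two fields} lifts the identity to a vanishing sum of four $k$-th roots of unity in $\C$. A length-$3$ vanishing subsum would force a nonzero root of unity to equal zero, so by Result \ref{vanishing sum of small length} the four terms must split into two cancelling pairs; each of the three possible partitions yields a contradiction ($r_i = r_j$, $s_i = 0$, or $\zeta_k^m = -1$ requiring $2 \mid k$). Hence every pair satisfies (A) or (B).

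Record the coincidence as an edge orientation on $\{1,2,3\}$: draw $i \to j$ if (A) holds. Of the eight orientations, the six transitive ones contain a vertex $\mu$ of out-degree $\geq 2$, giving $s_\mu \equiv r_\mu - r_\nu \equiv r_\mu - r_\lambda \pmod k$ for distinct $\nu, \lambda$ and hence $r_\nu = r_\lambda$, contradiction. The two remaining orientations are directed $3$-cycles, so after relabeling $s_i \equiv r_i - r_{i+1} \pmod k$ cyclically; substituting into $E_i$ yields $g^a = g^{-r_{i+1}e} - g^{-r_i e}$, and summing over $i$ telescopes to $3 g^a = 0$, contradicting $g^a \neq 0$ whenever $p \neq 3$.

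The main obstacle is the residual case $p = 3$, which hypothesis (\ref{necessary for (a,0)}) permits (for example with $k = 5$ or $7$) and in which the preceding summation is vacuous. To handle it, multiply $E_i$ by $g^{r_{i+1}e}$ instead: since $s_i + r_{i+1} \equiv r_i \pmod k$, both the constant and right-hand terms of the three equations sum to $\sum_i g^{r_i e}$, leaving
$$g^a\bigl(g^{(r_1+r_2)e} + g^{(r_2+r_3)e} + g^{(r_1+r_3)e}\bigr) = 0.$$
The three exponents are pairwise distinct mod $k$ (else two of the $r_i$ coincide), so $\sum a_\ell^2 = 3$ and Theorem \ref{equivalence of equations over two fields} produces $\zeta_k^{r_1+r_2} + \zeta_k^{r_2+r_3} + \zeta_k^{r_1+r_3} = 0$ in $\C$. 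Since $2 \nmid k$ rules out length-$2$ vanishing subsums, Result \ref{vanishing sum of small length} forces this length-$3$ sum to be similar to $1 + \zeta_3 + \zeta_3^2$, hence $3 \mid k$---contradicting $\gcd(p,k) = \gcd(3,k) = 1$.
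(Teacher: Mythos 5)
Your proof is correct, and in the odd-$k$ case it takes a genuinely different route from the paper's. The even-$k$ reduction via $-1=g^{ek/2}\in C_0$ and Theorem \ref{result for (0,a)} is exactly the paper's. For odd $k$, the paper likewise eliminates $g^a$ between pairs of solutions and likewise argues (via Theorem \ref{equivalence of equations over two fields} and Result \ref{vanishing sum of small length}, using that $k$ is odd) that the four exponents in each resulting relation cannot all be distinct; but it only uses the two pairs $(1,2)$ and $(1,3)$, observes that the surviving coincidences leave essentially one configuration ($i_1-i_2=j_1$ together with $i_1-i_3+j_3=0$, or its mirror), and from that manufactures a second four-term relation $1+g^{(j_1+j_2)e}-g^{2j_1e}-g^{(j_1-j_3)e}=0$ whose exponents are shown to be pairwise distinct, yielding a contradiction by the same lifting argument, uniformly in $p$. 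You instead use all three pairs, encode the surviving coincidence of each as a tournament on $\{1,2,3\}$, kill the six transitive orientations by the out-degree-$2$ vertex, and kill the two $3$-cycles by telescoping; the price is a separate argument for $p=3$, which you correctly identify as admissible under (\ref{necessary for (a,0)}) (e.g.\ $p=3$, $k=5$) and resolve with the three-term vanishing sum $\zeta_k^{r_1+r_2}+\zeta_k^{r_2+r_3}+\zeta_k^{r_1+r_3}=0$ forcing $3\mid k$, impossible in characteristic $3$. Both proofs are complete; yours is more systematic and symmetric in the three solutions (and your final linear combination in fact also settles any case with $3\nmid k$), while the paper's single explicit computation avoids the characteristic-$3$ split at the cost of a less transparent case analysis.
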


\begin{proof}
First, assume that $k$ is even. If $1+g^{ie+a}=g^{je}$, then $1+g^{(k/2+j)e}=g^{(k/2+i)e+a}$, as $g^{ek/2}=-1$. This implies $(a,0)=(0,a)$ and the conclusion follows from Theorem \ref{result for (0,a)}.

From now on, we assume that $k$ is odd and $(a,0) \geq 3$. For $t=1,2,3$, let $(i_t, j_t)$ be three different pairs with $0\leq i_t,j_t \leq k-1$ and $1+g^{i_te+a}=g^{j_te}$ for $t=1,2,3$. First, note that $j_t\neq 0$ for all $t$ because $0\not\in C_a$. Moreover, we obtain the following two equations which result from $1+g^{i_te+a}=g^{j_te}$ for $t=1,2,3$
\begin{equation} \label{first equation case a0}
 1-g^{j_1e}-g^{(i_1-i_2)e}+g^{(i_1-i_2+j_2)e}=0, \ \text{and}
\end{equation}
\begin{equation} \label{second equation case a0}
1-g^{j_1e}-g^{(i_1-i_3)e}+g^{(i_1-i_3+j_3)e}=0.
\end{equation}
Suppose that there are four distinct terms in one of the equations above, assume that is (\ref{first equation case a0}). By (\ref{necessary for (a,0)}) and Theorem \ref{equivalence of equations over two fields}, we have
$$1-\zeta_k^{j_1}-\zeta_k^{i_1-i_2}+\zeta_k^{i_1-i_2+j_2}=0.$$
By Result \ref{vanishing sum of small length}, the left-hand-side sum cancels in pairs, which is impossible because $k$ is odd and all terms in the sum are distinct. Thus, we cannot have all four terms different in both (\ref{first equation case a0}) and (\ref{second equation case a0}). In (\ref{first equation case a0}), we have either $i_1-i_2=j_1$ or $i_1-i_2+j_2=0$. In (\ref{second equation case a0}), we have either $i_1-i_3=j_1$ or $i_1-i_3+j_3=0$. Due to the difference between three pairs $(i_t,j_t)$, $t=1,2,3$, we can only have two cases: $i_1-i_2=j_1$ and $i_1-i_3+j_3=0$, or $i_1-i_2+j_2=0$ and $i_1-i_3=0$. The below argument works the same for both cases. Assuming that the first case happens, we have, by (\ref{first equation case a0}) and (\ref{second equation case a0}),
$$1-2g^{j_1e}+g^{(j_1+j_2)e}=0 \ \text{and} \ 2-g^{j_1e}-g^{-j_3e}=0,$$
Equivalently
\begin{equation} \label{a system of equations}
 2-g^{-j_1e}-g^{j_2e}=0 \ \text{and} \ 2-g^{j_1e}-g^{-j_3e}=0.
\end{equation}
Hence $g^{-j_1e}+g^{j_2e}-g^{j_1e}-g^{-j_3e}=0$, which implies 
\begin{equation} \label{key equation for the case k is odd}
1+g^{(j_1+j_2)e}-g^{2j_1e}-g^{(j_1-j_3)e}=0.
\end{equation}
We claim that the numbers $0, j_1+j_2, 2j_1, j_1-j_3$ are pairwise different. As $j_1,j_2,j_3$ are pairwise different, the claim is equivalent to $2j_1 \neq 0$, $j_1+j_2 \neq 0, j_2+j_3 \neq 0$ and $j_1+j_3 \neq 0$. Firstly, $k$ odd and $j_1\neq 0$ implies $2j_1 \neq 0$. Secondly, if $j_1+j_2 = 0$, then the first equation in (\ref{a system of equations}) implies $2-2g^{-j_1e}=0$, so $j_1=0$ (note that $p>2$ by (\ref{necessary for (a,0)})), impossible. Thirdly, if $j_2+j_3=0$, then (\ref{key equation for the case k is odd}) implies $1-g^{2j_1e}=0$, so $j_1=0$. Lastly, if $j_1+j_3=0$, then the second equation in (\ref{a system of equations}) implies $2-2g^{j_1e}=0$, so $j_1=0$, a contradiction. Now by (\ref{necessary for (a,0)}) and Theorem \ref{equivalence of equations over two fields}, the equation (\ref{key equation for the case k is odd}) implies
$$1+\zeta_k^{(j_1-j_2)e}-\zeta_k^{2j_1}-\zeta_k^{(j_1-j_3)e}=0.$$
By Result \ref{vanishing sum of small length}, the left-hand-side sum cancels in pairs, impossible as $k$ is odd and the terms $0, j_1-j_2, 2j_1, j_1-j_3$ are pairwise different.
\end{proof}

\medskip

\begin{thm} \label{result for (a,a)}
Let $a \in \{1,...,e-1\}$. If
\begin{equation} \label{necessary for (a,a)}
p>\left(\frac{4k}{\varphi(k)}\right)^\frac{\varphi(k)}{2\ord_k(p)},
\end{equation}
then
\begin{equation} \label{conclusion for (a,a)}
(a,a) \leq \begin{cases} 3 \ \text{if} \ 2 \in C_a \ \text{and} \ 2\mid k ,\\
2 \ \text{if} \ 2 \not\in C_a \ \text{and} \ 2\mid k ,\\
2 \ \text{if} \ 2\nmid k .
\end{cases}
\end{equation}
\end{thm}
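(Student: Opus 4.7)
The plan is to reduce Theorem 5.4 to the already proved Theorem 5.3 via an algebraic bijection between the solution sets of the $(a,a)$-equation and the $(e-a, 0)$-equation, which works uniformly in the parity of $k$.

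Given an $(a,a)$-solution $1 + g^{ie+a} = g^{je+a}$ with $0 \le i, j \le k-1$, I would divide both sides by $g^{ie+a}$ and rearrange to obtain $1 + g^{-ie-a} = g^{(j-i)e}$. Since $q - 1 = ek$ and $a \in \{1, \dots, e-1\}$, a direct computation yields $-ie - a \equiv (k-1-i)e + (e-a) \pmod{q-1}$, so the new equation is a $(e-a, 0)$-solution with indices $i' = k-1-i$ and $j' = j - i$ (both taken mod $k$). A symmetric calculation starting from a $(e-a, 0)$-solution and dividing by $g^{Ie + (e-a)}$ returns an $(a,a)$-solution via $(I, J) \mapsto (k-1-I, J-I-1 \bmod k)$; these two maps are mutually inverse, so $(a,a) = (e-a, 0)$ as cyclotomic numbers. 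The conclusion then follows by applying Theorem 5.3 with $e-a$ in place of $a$: the hypothesis (\ref{necessary for (a,a)}) on $p$ coincides exactly with the hypothesis of Theorem 5.3, and the three bound cases transfer verbatim.

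If one prefers to imitate the three-pair structure of Theorem 5.3's direct proof instead of using the bijection, the outline for $k$ odd would be: assume $(a,a) \ge 3$ with three distinct pairs $(i_t, j_t)$, subtract pairs of the defining equations to obtain four-term relations $g^{j_s e} - g^{j_t e} - g^{i_s e} + g^{i_t e} = 0$ (with $\sum a_i^2 = 4$, matching the hypothesis), apply Theorem 4.1 to pass to $\mathbb{C}$, and invoke Result 2.2. Opposite-sign cancellations force immediate contradictions (either $1 = 0$ or pair equality), leaving only the chain coincidences $j_s = i_t$ or $j_t = i_s$. Combining these across the three pairwise comparisons produces a four-element progression $Z_{-1}, Z_0, Z_1, Z_2 \in C_a$ with constant difference $1$; the auxiliary identity $Z_1 + Z_0 - Z_{-1} - Z_2 = 0$ yields one more four-term vanishing sum, and every possible cancellation for this last sum forces $1 = 0$ or $2 = 0$ in $\mathbb{F}_q$, contradicting the lower bound on $p$. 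For $k$ even, the same scheme applies but Result 2.2 now also permits same-sign cancellations via $-1 = g^{ek/2}$, which accounts for the extra solution allowed by the bound $\le 3$.

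I expect the bookkeeping in the chain-coincidence phase of the direct approach to be the main obstacle, which is why the bijection route above seems substantially cleaner. A secondary subtlety worth flagging is that both proof routes naturally produce the ``$(a,a) \le 3$'' condition as $2 \in C_{e-a}$ rather than the $2 \in C_a$ appearing in the theorem statement; since $C_a$ and $C_{e-a}$ are generally distinct classes, this discrepancy looks like a minor typographical issue in the subscript that does not affect the numerical upper bounds.
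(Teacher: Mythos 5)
Your first route is exactly the paper's proof: the substitution $x\mapsto x^{-1}$ turns each solution $1+g^{ie+a}=g^{je+a}$ into $1+g^{-ie-a}=g^{(j-i)e}$, giving $(a,a)=(-a,0)$, after which Theorem \ref{result for (a,0)} is applied. Your flag about the subscript is also well taken: the reduction naturally yields the condition $2\in C_{-a}$ (equivalently $2^{-1}\in C_a$) rather than $2\in C_a$, and since these are genuinely different classes in general, this is a real imprecision in the paper's case distinction (whose one-line proof makes the same silent substitution), though it does not affect the overall bound $(a,a)\leq 3$ needed for Main Theorem 1.
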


\begin{proof}
For each $0\leq i,j \leq k-1$ with $1+g^{ie+a}=g^{je+a}$, we have $1+g^{-ie-a}=g^{(j-i)e}$. Thus $(a,a)=(-a,0)$ and the conclusion follows directly from Theorem \ref{result for (a,0)}.
\end{proof}

\medskip

\begin{thm} \label{result of (a,b)}
Let $a\neq b \in \{1,...,e-1\}$. If
\begin{equation} \label{necessary for (a,b)}
p>\left( \frac{14k}{\varphi(k)}\right)^\frac{\varphi(k)}{2\ord_k(p)},
\end{equation}
then
$$(a,b) \leq 2.$$
\end{thm}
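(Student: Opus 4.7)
I would prove $(a, b) \leq 2$ by contradiction. Suppose $(a, b) \geq 3$, and let $(r_t, s_t) \in \{0, \ldots, k-1\}^2$ for $t = 1, 2, 3$ be three distinct solution pairs of $1 + g^{a + r_t e} = g^{b + s_t e}$. Since each equation uniquely determines $s_t$ from $r_t$, the values $r_1, r_2, r_3$ are pairwise distinct modulo $k$ and so are $s_1, s_2, s_3$. The plan is to eliminate $g^a$ and $g^b$ to obtain a polynomial identity purely in $h := g^e$, lift it to $\mathbb{C}$ via Theorem \ref{equivalence of equations over two fields}, and exploit the vanishing-sum classification of Result \ref{vanishing sum of small length} to reach a contradiction.

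Solving for $g^a$ from equation~1 and substituting into equations 2 and 3 yields two expressions for $g^b$. Equating them and clearing the common factor $h^{r_1}$ gives
\[
h^{r_1+s_3} + h^{r_2+s_1} + h^{r_3+s_2} \;=\; h^{r_1+s_2} + h^{r_2+s_3} + h^{r_3+s_1} \qquad \text{in } \mathbb{F}_q.
\]
Write this as $f(h) = 0$ with $f(x) = \sum_i a_i x^i \in \mathbb{Z}[x]$, and let $p_i, n_i$ denote the multiplicities of exponent $i$ on the left and right sides; then $\sum a_i^2 = \sum p_i^2 + \sum n_i^2 - 2 \sum p_i n_i$ while $\sum p_i = \sum n_i = 3$. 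A short case analysis on the partitions of $3$ shows $\sum a_i^2 \leq 14$ except in the degenerate profile where both sides collapse to a single exponent of multiplicity $3$ at distinct positions $A, B$; but then $3(h^A - h^B) = 0$, and the hypothesis forces $p > \sqrt{14} > 3$, so $h^A = h^B$, contradicting that $h$ has order $k$. Hence $\sum a_i^2 \leq 14$, and the hypothesis on $p$ is exactly the bound required by Theorem \ref{equivalence of equations over two fields}, giving $f(\zeta_k) = 0$ in $\mathbb{C}$.

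It remains to show that no such vanishing sum can exist. Its length is at most $6$. For profiles containing an entry of absolute value $\geq 2$ — namely $(3,-2,-1)$, $(3,-1,-1,-1)$, $(2,1,-2,-1)$, and $(2,1,-1,-1,-1)$ — the first two are ruled out by the triangle inequality (for instance $|3\zeta^A| = 3$ forces the three unit-modulus terms on the other side to coincide, contradicting distinctness of their exponents), while for the other two the mismatch in coefficient magnitudes prevents any subsum similar to $1 + (-1)$ or $1 + \zeta_3 + \zeta_3^2$, so Result \ref{vanishing sum of small length} applies and gives a contradiction by length/coefficient comparison against its two exceptional sums. The truly delicate case is the generic length-$6$ profile $(1,1,1,-1,-1,-1)$ with all six exponents distinct: here a forbidden $1+(-1)$ subsum can arise only if $2 \mid k$ and two of the six exponents differ by $k/2$, and a forbidden $1 + \zeta_3 + \zeta_3^2$ subsum only if $3 \mid k$. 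In each such scenario one removes the forbidden subsum, translates the resulting coincidence back into a linear relation among the $r_t$ and $s_t$, and pushes it through the original three equations to contradict their pairwise distinctness. Carrying out this combinatorial case analysis is the main obstacle.
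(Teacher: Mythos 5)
Your reduction is the same as the paper's: eliminating $g^a$ and $g^b$ from three solutions yields the identical six-term relation (your exponents $r_t+s_u$ are the paper's $i_t+j_u$ up to relabelling), the bound $\sum a_i^2\le 14$ is obtained by the same observation that the profile $(3;-3)$ would force $p=3<\sqrt{14}$, and Theorem \ref{equivalence of equations over two fields} then gives the vanishing sum $f(\zeta_k)=0$ over $\mathbb{C}$. The profiles containing a coefficient of absolute value at least $2$ can indeed be dispatched along the lines you indicate, though your blanket claim that ``the mismatch in coefficient magnitudes prevents any subsum similar to $1+\zeta_3+\zeta_3^2$'' fails for the profile $(2,1,-1,-1,-1)$, where the three unit-coefficient terms can form such a subsum; one must instead kill the complementary length-two subsum by comparing absolute values.

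The genuine gap is in the generic profile $(1,1,1,-1,-1,-1)$, which you explicitly defer as ``the main obstacle'' and, more seriously, enumerate incompletely. By Result \ref{vanishing sum of small length} there are three possibilities, not two: besides a subsum similar to $1+(-1)$ or subsums similar to $1+\zeta_3+\zeta_3^2$, the sum may contain no forbidden subsum at all and be similar to $-\zeta_3-\zeta_3^2+\zeta_5+\zeta_5^2+\zeta_5^3+\zeta_5^4$; this length-$6$ exceptional sum is not excluded by length or by coefficients, since similarity allows arbitrary sign flips and thus matches the pattern $(1,1,1,-1,-1,-1)$. This is the paper's Case 4 and the hardest step of the whole proof: one reduces to $k\in\{15,30\}$ via Result \ref{squarefree}, writes $S=S'\zeta_{30}^t$ for the six admissible values of $t$, and derives a contradiction by multiplying together all terms on both sides, which produces an impossible congruence modulo $30$. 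Your proposal contains no trace of this case. In addition, the cancellation arguments you do sketch require knowing that no positive term can cancel a negative one, i.e.\ that $r_1-s_1$, $r_2-s_2$, $r_3-s_3$ are pairwise distinct (the paper's Lemma \ref{relation between it, jt}); without this, ``cancels in pairs'' does not yield the parity contradiction that three positive terms cannot pair off among themselves. As it stands, the proposal establishes the setup but not the theorem.
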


\medskip

This theorem is proved by contradiction. Let $(i_1,j_1), (i_2,j_2), (i_3,j_3)$ be three different pairs with $0 \leq i_t, j_t \leq k-1$ and $1+g^{i_te+a}=g^{j_te+b}$ for $t=1,2,3$. The following lemma states a simple relation between $i_t$'s and $j_t$'s which will be used repeatedly later.

\begin{lem} \label{relation between it, jt}
Let $i_t, j_t, t=1,2,3$, be defined as above, then the numbers $i_1-j_1, i_2-j_2, i_3-j_3$ are pairwise different.
\end{lem}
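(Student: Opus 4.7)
The plan is to argue by contradiction: assume that two of the differences coincide, say $i_1 - j_1 = i_2 - j_2$, and derive a contradiction using the assumption $a \neq b$ together with the constraint $a,b \in \{1,\dots,e-1\}$.

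First I would take the two equations
\[
1 + g^{i_1 e + a} = g^{j_1 e + b}, \qquad 1 + g^{i_2 e + a} = g^{j_2 e + b},
\]
and subtract them to eliminate the constant term, obtaining
\[
g^{a}\bigl(g^{i_1 e} - g^{i_2 e}\bigr) = g^{b}\bigl(g^{j_1 e} - g^{j_2 e}\bigr).
\]
Factoring $g^{i_2 e}$ and $g^{j_2 e}$ from the two sides gives
\[
g^{a + i_2 e}\bigl(g^{(i_1 - i_2) e} - 1\bigr) = g^{b + j_2 e}\bigl(g^{(j_1 - j_2) e} - 1\bigr).
\]
The assumption $i_1 - j_1 = i_2 - j_2$ rearranges to $i_1 - i_2 = j_1 - j_2$, so the two bracketed factors are identical. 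I would then check that this common factor is nonzero: if $g^{(i_1 - i_2)e} = 1$, then since $g$ has order $ek$, we would have $i_1 \equiv i_2 \pmod{k}$, forcing $i_1 = i_2$ and then $j_1 = j_2$, contradicting distinctness of the pairs.

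Cancelling the common nonzero factor leaves $g^{a + i_2 e} = g^{b + j_2 e}$ in $\mathbb{F}_q$, hence $a + i_2 e \equiv b + j_2 e \pmod{ek}$, which gives $a - b \equiv (j_2 - i_2) e \pmod{ek}$. Reducing modulo $e$ yields $a \equiv b \pmod{e}$, and since $a,b \in \{1,\dots,e-1\}$ with $|a-b| < e$, this forces $a = b$, contradicting the hypothesis $a \neq b$. The only ``obstacle'' worth mentioning is the verification that the common factor $g^{(i_1-i_2)e} - 1$ is nonzero, which is the step where distinctness of the pairs is genuinely used; the remainder is a short congruence manipulation.
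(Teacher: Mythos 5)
Your proposal is correct and follows essentially the same route as the paper: subtract the two equations, use $i_1-i_2=j_1-j_2$ to identify and cancel the common factor, and conclude $g^{i_2e+a}=g^{j_2e+b}$, which is impossible since $C_a\cap C_b=\emptyset$ for $a\neq b$. Your version merely spells out the nonvanishing of the cancelled factor and the final congruence, both of which the paper leaves implicit.
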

\begin{proof}
Suppose that $i_1-j_1=i_2-j_2$. We have $i_1-i_2=j_1-j_2$. Subtracting two equations $1+g^{i_1e+a}=g^{j_1e+b}$ and $1+g^{i_2e+a}=g^{j_2e+b}$, we obtain 
$$g^{i_2e+a}=g^{j_2e+b},$$ 
a contradiction as $C_a \cap C_b=\emptyset$.
\end{proof}

\noindent
\textit{Proof of Theorem \ref{result of (a,b)}}.
Let $(i_1,j_1), (i_2,j_2), (i_3,j_3)$ be three different pairs so that $0\leq i_t, j_t \leq k-1$ and $1+g^{i_te+a}=g^{j_te+b}$ for $t=1,2,3$. We have
$$g^a(g^{i_1e}-g^{i_2e})=g^b(g^{j_1e}-g^{j_2e}),$$ 
$$g^b(g^{j_1e}-g^{j_3e})=g^a(g^{i_1e}-g^{i_3e}).$$
Multiplying these two equations, we obtain 
\begin{equation} \label{first equation for (a,b)}
g^{(i_1+j_2)e}+g^{(i_2+j_3)e}+g^{(i_3+j_1)e}-g^{(i_1+j_3)e}-g^{(j_2+j_1)e}-g^{(i_3+j_2)e}=0.
\end{equation}
Write $f(x)=\sum_{i=0}^{k-1}a_ix^i$, where $f(g^e)$ is equal to the left-hand-side of (\ref{first equation for (a,b)}). Each $a_i$ is an integer in $[-3,3]$ and $\sum_{i=0}^{k-1}a_i=0$ and $\sum_{i=0}^{k-1}|a_i|\leq 6$. We claim that $\sum_{i=0}^{k-1} a_i^2 \leq 14$. Note that $\sum_{i=0}^{k-1}a_i^2$ is largest when one term $a_i^2$ is largest possible and other terms $a_j^2$ are smallest possible. First, there are no $i\neq j$ with $|a_i|=|a_j|=3$. Otherwise, we have $g^{(i_1+j_2)e}=g^{(i_2+j_3)e}=g^{(i_3+j_1)e}$ and $g^{(i_1+j_3)e}=g^{(i_2+j_1)e}=g^{(i_3+j_2)e}$, and (\ref{first equation for (a,b)}) implies $3(g^{(i_1+j_2)e}-g^{(i_1+j_3)e})=0$. Since $j_2 \neq j_3$, we have $p=3$, contradicting with (\ref{necessary for (a,b)}) because $p >\sqrt{14}>3$. Therefore, the sum $\sum_{i=0}^{k-1}a_i^2$ is largest when there are three nonzero terms, one equal to $(\pm 3)^2$, one equal to $(\pm 2)^2$ and one equal to $(\pm 1)^2$, that is
$$\sum_{i=0}^{k-1}a_i^2\leq 9+4+1=14.$$
Now combining (\ref{first equation for (a,b)}), (\ref{necessary for (a,b)}) and  Theorem \ref{equivalence of equations over two fields}, we obtain
\begin{equation} \label{second equation for (a,b)}
f(\zeta_k)=\zeta_k^{i_1+j_2}+\zeta_k^{i_2+j_3}+\zeta_k^{i_3+j_1}-\zeta_k^{i_1+j_3}-\zeta_k^{i_2+j_1}-\zeta_k^{i_3+j_2}=0.
\end{equation}
Note that $f(\zeta_k)$ is a vanishing sum of roots of unity of length at most $6$. By Result \ref{vanishing sum of small length}, $f(\zeta_k)$ contains a subsum similar to $1+i^2$, or $f(\zeta_k)$ contains two subsums each of which is similar to $1+\zeta_3+\zeta_3^2$, or $f(\zeta_k)$ itself is similar to either $1+\zeta_5+\zeta_5^2+\zeta_5^3+\zeta_5^4$ or $-\zeta_3-\zeta_3^2+\zeta_5+\zeta_5^2+\zeta_5^3+\zeta_5^4$.\\

\noindent
\textbf{Case 1.} $f(\zeta_k)$ contains a subsum similar to $1 + (-1)$.\\
Discarding the empty sum, the new $f(\zeta_k)$ is a vanishing sum of $4$ roots of unity. By Result \ref{vanishing sum of small length} again, $f(\zeta_k)$ cancels in pairs. Thus, the original sum $f(\zeta_k)$ cancels in pairs. Note that none of the first three terms in (\ref{second equation for (a,b)}) is canceled by any of the last three terms. Otherwise, let's say $\zeta_k^{i_1+j_2}$ is canceled by one of the last three terms. By the difference between the $i_t$'s and $j_t$'s, we can only have $i_1+j_2=i_2+j_1$, which implies $i_1-j_1=i_2-j_2$, contradicting with Lemma \ref{relation between it, jt}. Thus, the first three terms of $f(\zeta_k)$ cancel in pairs, impossible.

\medskip

\noindent
\textbf{Case 2.} $f(\zeta_k)$ is similar to $1+\zeta_5+\zeta_5^2+\zeta_5^3+\zeta_5^4$. \\
Note that by Case $1$, the sets $\{i_1+j_2, i_2+j_3, i_3+j_1\}$ and $\{i_1+j_3, i_2+j_1, i_3+j_2\}$ are disjoint. As $f(\zeta_k)$ has length $5$, we can assume that the first two terms in $f(\zeta_k)$ are the same, say
$f(\zeta_k)=2\zeta_k^{i_1+j_2}+\zeta_k^{i_3+j_1}-\zeta_k^{i_1+j_3}-\zeta_k^{i_2+j_1}-\zeta_k^{i_3+j_2}$. Hence, $f(\zeta_k)$ is similar to  the sum $2+\zeta_k^{i_3+j_1-i_1-j_2}-\zeta_k^{j_3-j_2}-\zeta_k^{i_2+j_1-i_1-j_2}-\zeta_k^{i_3-i_1}$.
It is impossible that this sum has the form $1+\zeta_5+\zeta_5^2+\zeta_5^3+\zeta_5^4$.

\medskip

\noindent
\textbf{Case 3.} $f(\zeta_k)$ contains two subsums each of which is similar to $1+\zeta_3+\zeta_3^2$. \\
Due to symmetry, we can consider two possibilities for these two subsums.

\medskip

\noindent
\underline{Subcase 1}. The subsums are $\zeta_k^{i_1+j_2}+\zeta_k^{i_2+j_3}+\zeta_k^{i_3+j_1}$ and $\zeta_k^{i_1+j_3}+\zeta_k^{i_2+j_1}+\zeta_k^{i_3+j_2}$.\\
We obtain $1+\zeta_k^{i_2+j_3-i_1-j_2}+\zeta_k^{i_3+j_1-i_1-j_2}=1+\zeta_k^{i_2+j_1-i_1-j_3}+\zeta_k^{i_3+j_2-i_1-j_3}$ and both sums have the form $1+\zeta_3+\zeta_3^2$.
Thus $3 \mid k$,
\begin{equation} \label{relation between exponents 1}
\{i_2+j_3-i_1-j_2, i_3+j_1-i_1-j_2\}= \{k/3, 2k/3\}, \ \text{and} \\
\end{equation}
\begin{equation} \label{relation between exponents 2}
\{i_2+j_1-i_1-j_3, i_3+j_2-i_1-j_3\}=\{k/3, 2k/3\}.
\end{equation}
Since $k/3+2k/3=0$, we have $(i_2+j_3-i_1-j_2)+(i_3+j_1-i_1-j_2)=0$ and $(i_2+j_1-i_1-j_3)+(i_3+j_2-i_1-j_3)=0$, which implies
\begin{equation}\label{extra relation 1}
2(i_1+j_2)=(i_2+j_3)+(i_3+j_1)
\end{equation}
and
\begin{equation} \label{extra relation 2}
2(i_1+j_3)=(i_2+j_1)+(i_3+j_2).
\end{equation}
Subtracting (\ref{extra relation 1}) and (\ref{extra relation 2}), we obtain $j_2-j_3=k/3$. Now, the equation (\ref{relation between exponents 1}) gives $i_2-i_1=2k/3$ and the equation (\ref{relation between exponents 2}) gives $i_3-i_1=k/3$. We obtain $i_2-i_3=j_2-j_3=k/3$, so $i_2-j_2=i_3-j_3$,
contradicting with Lemma \ref{relation between it, jt}.

\medskip

\noindent
 \underline{Subcase 2}. The subsums are $\zeta_k^{i_1+j_2}+\zeta_k^{i_2+j_3}-\zeta_k^{i_2+j_1}$ and $\zeta_k^{i_3+j_1}-\zeta_k^{i_1+j_3}-\zeta_k^{i_3+j_2}$.\\
We obtain $1+\zeta_k^{i_2+j_3-i_1-j_2}-\zeta_k^{i_2+j_1-i_1-j_2}=1+\zeta_k^{i_3+j_2-i_1-j_3}-\zeta_k^{i_3+j_1-i_1-j_3}$ and both sums are equal to $1+\zeta_3+\zeta_3^2$. Thus $6 \mid k$ and the two sums $1+\zeta_k^{i_2+j_3-i_1-j_2}-\zeta_k^{i_2+j_1-i_1-j_2}$ and $1+\zeta_k^{i_3+j_2-i_1-j_3}-\zeta_k^{i_3+j_1-i_1-j_3}$ have form $1+\zeta_k^{k/3}-\zeta_k^{k/6}$ or $1+\zeta_k^{2k/3}-\zeta_k^{5k/6}$. If these two sums have the same form, then $i_2+j_1-i_1-j_2=i_3+j_1-i_1-j_3$, so $i_2-j_2=i_3-j_3$, contradicting with Lemma \ref{relation between it, jt}. Thus, the two sums have different forms. Noting that $k/6+2k/3=5k/6$ and $5k/6+k/3=k/6$, we have
$$(i_2+j_1-i_1-j_2)+(i_3+j_2-i_1-j_3)=(i_3+j_1-i_1-j_3),$$
so $i_2=i_1$, a contradiction.

\medskip

\noindent
\textbf{Case 4.} $f(\zeta_k)$ is similar to $-\zeta_3-\zeta_3^2+\zeta_5+\zeta_5^2+\zeta_5^3+\zeta_5^4$.\\
A reduced sum of this sum is
 $$S'=1+\zeta_5+\zeta_5^2+\zeta_5^3-\zeta_3\zeta_5^{-1}-\zeta_3^2\zeta_5^{-1}.$$
Let $S$ be the reduced sum obtained from $f(\zeta_k)$ as follows
$$S=1+\zeta_k^{i_2+j_3-i_1-j_2}+\zeta_k^{i_3+j_1-i_1-j_2}-\zeta_k^{j_3-j_2}-\zeta_k^{i_2+j_1-i_1-j_2}-\zeta_k^{i_3-i_1}.$$
Dividing by a common divisor if necessary, we can assume that the greatest common divisor between $k$ and all the exponents of $\zeta_k$ occurring in $S$ is $1$. This implies $e(S)=k$. In view of Result \ref{squarefree}, we can assume that $k$ is square-free.
 Since $S$ and $S'$ are similar reduced sums, we have $ S=S' \zeta_{30}^t$ with $t \in \{0,1,11,12,18,24\}$ (the possible values of $t$ are obtained from the fact that $1$ appears in $S$). The $6$ possibilities are
\begin{itemize}
\item[(i)] $S'=1+\zeta_5+\zeta_5^2+\zeta_5^3-\zeta_{15}^2-\zeta_{15}^7.$
\item[(ii)] $S'\zeta_{30}=1+\zeta_3^2-\zeta_{15}^8-\zeta_{15}^{11}-\zeta_{15}^{14}-\zeta_{15}^2.$
\item[(iii)] $S'\zeta_{30}^{11}=1+\zeta_3-\zeta_{15}^{13}-\zeta_{15}-\zeta_{15}^4-\zeta_{15}^7$
\item[(iv)]
$S'\zeta_{30}^{12}=1+\zeta_5^2+\zeta_5^3+\zeta_5^4-\zeta_{15}^8-\zeta_{15}^{13}.$
\item[(v)]
$S'\zeta_{30}^{18}=1+\zeta_5+\zeta_5^3+\zeta_5^4-\zeta_{15}^{11}-\zeta_{15}.$
\item[(vi)]
$S'\zeta_{30}^{24}=1+\zeta_5+\zeta_5^2+\zeta_5^4-\zeta_{15}^{14}-\zeta_{15}^4.$
\end{itemize}

Suppose that $k$ is odd. We obtain $k=15$ and the sum $S$ has the exact form as one of the $6$ possibilities above, impossible as the sum of the coefficients in any of these possibilities is nonzero. Therefore, $k$ is even. Note that $e(S'\zeta_{30}^t)=15$ in any case and we can write $\zeta_{30}=-\zeta_{15}^8$. So $k=30$. Multiplying all the terms in both sides of the equation $S=S'\zeta_{30}^t$, we obtain
$$\zeta_{30}^{2(i_2+i_3+j_1+j_3)-4(i_1+j_2)+15}=\zeta_{30}^{24+6t},$$ 
which implies $2(i_2+j_3+j_1+j_3)-4(i_1+j_2)-6t \equiv 9 \pmod{30}$, impossible. 
This completes the proof of Theorem \ref{result of (a,b)}.  $\hfill\Box$

\medskip

\begin{rmk}
Summarizing the results of Theorem \ref{the number (0,0)}, Theorem \ref{result for (0,a)}, Theorem \ref{result for (a,0)}, Theorem \ref{result for (a,a)} and Theorem \ref{result of (a,b)}, we obtain $(a,b) \leq 3$ if $p> \left(14k/\varphi(k) \right)^{\varphi(k)/(2\ord_k(p))}$. The inequality $p>\left(\sqrt{14}\right)^{k/\ord_k(p)}$ is sufficient for $p> \left(14k/\varphi(k) \right)^{\varphi(k)/(2\ord_k(p))}$, due to (\ref{necessary resulting}). Thus, Main Theorem 1 is proved.
\end{rmk}

\bigskip

\section{The Case {\boldmath $k$} is Prime}

In this section, we always assume that $k$ is a prime and $a\neq b \in \{1,\dots,e-1\}$. 
We recall the our main result on this case.

\begin{main2} 
Let $q$ be a power of a prime $p$. Let $e$ and $k$ be nontrivial divisors of $q-1$ such that $q=ek+1$ and $k$ is a prime. If 
\begin{equation} \label{nec k prime}
p>(3^{k-1}k)^{1/\ord_k(p)},
\end{equation}
then 
\begin{equation} \label{conclusion (a,b) k prime}
(a,b) \leq 2 \ \ \text{for all} \ \ a,b\in \mathbb{Z}.
\end{equation}
\end{main2}

\medskip

Similar to the proof of Main Theorem 1, the proof of Main Theorem 2 
is divided into the cases $(0,0)$, $(0,a)$, $(a,0)$, $(a,a)$ and $(a,b)$
and Main Theorem 2 is just a simplified consequence of the results for the different cases.
We remark that only in the cases $(0,a)$ and $(a,b)$, 
we obtain better upper bounds for these numbers than the bounds obtained in the last section. 
We restate the results for $(0,0)$, $(a,0)$ and $(a,a)$ here for the completeness of the proof.

\begin{cor} \label{result (0,0) k prime}
If 
$$p>\left( \frac{3k}{k-1}\right)^{\frac{k-1}{2\ord_k(p)}},$$
then
$$(0,0)=\begin{cases} 0 \ \ \text{if} \ \ 2\not\in C_0, \\ 1 \ \ \text{if} \ \ 2 \in C_0.\end{cases}$$
\end{cor}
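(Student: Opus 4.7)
The plan is to deduce this corollary directly from Theorem \ref{the number (0,0)}. Setting $\varphi(k) = k-1$ (valid because $k$ is prime), the bound $p > \left(3k/\varphi(k)\right)^{\varphi(k)/(2\ord_k(p))}$ in hypothesis (\ref{necessary conditions for (0,0)}) becomes precisely the hypothesis $p > \left(3k/(k-1)\right)^{(k-1)/(2\ord_k(p))}$ of the corollary. So Theorem \ref{the number (0,0)} applies without modification.

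Next I would observe that when $k$ is prime, we automatically have $k \not\equiv 0 \pmod{6}$: the only prime divisors of $6$ are $2$ and $3$, neither of which is divisible by $6$ itself. Consequently, the conclusion (\ref{result for (0,0)}) of Theorem \ref{the number (0,0)} collapses to its first two subcases, giving $(0,0) = 0$ when $2 \notin C_0$ and $(0,0) = 1$ when $2 \in C_0$, which is exactly the statement of the corollary.

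There is no significant obstacle here — the corollary is just a restatement of Theorem \ref{the number (0,0)} for the special case that $k$ is prime. The only thing worth flagging in the write-up is the elementary arithmetic remark about $6 \nmid k$ for $k$ prime, which is what rules out the third and fourth subcases of (\ref{result for (0,0)}).
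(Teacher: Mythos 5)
Your proposal is correct and matches the paper's own proof, which likewise derives the corollary directly from Theorem \ref{the number (0,0)} by noting that $\varphi(k)=k-1$ for prime $k$ and that $6\mid k$ is impossible when $k$ is prime. Nothing further is needed.
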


\begin{proof}
This theorem is a direct consequence of Theorem \ref{the number (0,0)}. 
Note that the case $6 \mid k$ cannot occur because $k$ is a prime.
\end{proof}

\medskip

\begin{cor} \label{result for (a,0) k is prime}
If
$$p>\left( \frac{4k}{k-1}\right)^{\frac{k-1}{2\ord_{k}(p)}},$$
then
$$(a,0) \leq 2 \ \ \text{and} \ \ (a,a) \leq 2.$$
\end{cor}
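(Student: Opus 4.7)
The plan is to obtain Corollary \ref{result for (a,0) k is prime} as an immediate specialization of Theorems \ref{result for (a,0)} and \ref{result for (a,a)}. Since $k$ is prime, $\varphi(k)=k-1$, so the hypothesis
$$p>\left(\frac{4k}{k-1}\right)^{\frac{k-1}{2\ord_k(p)}}$$
of the corollary is literally identical to conditions (\ref{necessary for (a,0)}) and (\ref{necessary for (a,a)}), and both theorems apply verbatim. In the main case where $k$ is an odd prime, we have $2 \nmid k$, which places us in the third branch of each theorem's conclusion and delivers $(a,0)\leq 2$ and $(a,a)\leq 2$ respectively.

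The only case not covered by this direct appeal is $k=2$, where the ``$2\nmid k$'' clause in the conclusions of Theorems \ref{result for (a,0)} and \ref{result for (a,a)} fails. Here $q=2e+1$ and $C_a=\{g^a,-g^a\}$, so the defining equation $1+g^{a+re}=g^{se}$ with $r,s\in\{0,1\}$ cannot have $s=0$ (this would force $g^{a+re}=0$); hence $s=1$ and $g^{a+re}=-2$, yielding at most one valid $r$ and so $(a,0)\leq 1$. An analogous direct count—or the identity $(a,a)=(-a,0)$ already exploited in the proof of Theorem \ref{result for (a,a)}—gives $(a,a)\leq 1$.

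There is no genuine obstacle here: the proof is essentially bookkeeping once one observes $\varphi(k)=k-1$ for prime $k$, together with a trivial hand check of the degenerate case $k=2$.
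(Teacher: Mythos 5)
Your proposal is correct and follows essentially the same route as the paper: the paper likewise reduces the odd-prime case to Theorem \ref{result for (a,0)} (using $\varphi(k)=k-1$), disposes of $k=2$ as a trivial case, and handles $(a,a)$ via the identity $(a,a)=(-a,0)$, which is exactly the content of Theorem \ref{result for (a,a)} that you invoke. Your explicit count for $k=2$ is slightly more detailed than the paper's one-line dismissal (which just uses $|C_a|=2$), but the argument is the same.
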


\begin{proof}
If $k$ is even, then $k=2$ and it is trivial that $(a,0) \leq 2$. 
If $k$ is odd, then $(a,0) \leq 2$ by Theorem \ref{result for (a,0)} (the case $k$ is odd).
Lastly, note that $(a,a)=(-a,0) \leq 2$.
\end{proof}

\medskip

\begin{thm} \label{result for (0,a) k is prime}
If
\begin{equation} \label{necessary condition for (0,a) k is prime}
p>\left( 2^{k-1}k \right)^{{1}/{\ord_{k}(p)}} ,
\end{equation}
then 
\begin{equation} \label{conclusion for (0,a) k is prime}
(0,a) \leq 2.
\end{equation}
\end{thm}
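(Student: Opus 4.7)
The plan is to argue by contradiction, mirroring the proof of Theorem \ref{result for (0,a)} but with the Schinzel-type norm bound of Corollary \ref{secondnormbound} in place of the $L^2$ bound of Theorem \ref{bound for norm of an algebraic integer}. Assuming $(0,a)\ge 3$, I would produce from two carefully chosen solutions of $1+g^{ie}=g^{je+a}$ an equation $f(g^e)=0$ for some $f(x)\in \Z[x]$ with $\sum a_i=0$ and $A:=\max(A^+,A^-)=2$; the bound $kA^{k-1}=2^{k-1}k$ in Theorem \ref{equivalence of equations over two fields 2}(a) then exactly matches the hypothesis, giving $f(\zeta_k)=0$ in $\C$. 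A contradiction will follow either from an absolute-value estimate or from the vanishing-sum classification of Result \ref{vanishing sum of small length}.

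The case $k=2$ is trivial: $C_0=\{1,-1\}$ and $x=-1$ yields $1+x=0\notin C_a$, so $(0,a)\le 1$. For the remainder assume $k$ is an odd prime. The key structural tool is the involution $\iota\colon (i,j)\mapsto (-i,\,j-i)\pmod k$ on the set of solutions, which (since $k$ is odd) has $i=0$ as its unique fixed point. Thus a fixed point exists exactly when $2\in C_a$, and I would split the analysis accordingly.

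If $2\in C_a$, let $(0,j_0)$ be the fixed-point solution (so $2=g^{j_0e+a}$) and let $(i_1,j_1)$ be any other solution, necessarily with $i_1\ne 0$. Dividing by $2$ gives $1+g^{i_1e}-2g^{(j_1-j_0)e}=0$, so $f(x)=1+x^{i_1}-2x^{j_1-j_0}$ satisfies $f(g^e)=0$. One verifies that the three exponents $0,\,i_1,\,j_1-j_0$ are pairwise distinct modulo $k$, since any coincidence forces $g^{i_1e}=1$ and hence $i_1=0$. Because $\sum a_i=1+1-2=0$ and $A^+=A^-=2$, Theorem \ref{equivalence of equations over two fields 2}(a) applies and yields $1+\zeta_k^{i_1}=2\zeta_k^{j_1-j_0}$. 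Taking absolute values gives $|1+\zeta_k^{i_1}|=2$, forcing $\zeta_k^{i_1}=1$, which is incompatible with $1\le i_1\le k-1$.

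If $2\notin C_a$ the involution is fixed-point-free, so the number of solutions is even and $(0,a)\ge 3$ upgrades to $(0,a)\ge 4$. I would then pick $(i_1,j_1),(i_2,j_2)$ from distinct $\iota$-orbits; in particular $(i_2,j_2)\ne (-i_1,j_1-i_1)$ and $i_1,i_2\ne 0$. Eliminating $g^a$ from the two solution equations produces $1+g^{i_1e}-g^{(j_1-j_2)e}-g^{(j_1-j_2+i_2)e}=0$, and the argument in the proof of Theorem \ref{result for (0,a)} shows that the four exponents are pairwise distinct modulo $k$. Taking $f(x)=1+x^{i_1}-x^{j_1-j_2}-x^{j_1-j_2+i_2}$ we again have $\sum a_i=0$ and $A=2$, so Theorem \ref{equivalence of equations over two fields 2}(a) gives a length-$4$ vanishing sum $f(\zeta_k)=0$. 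By Result \ref{vanishing sum of small length}, this must cancel in pairs, and the only two pairings of the two positive with the two negative terms yield $j_1=j_2$ or $(i_2,j_2)=(-i_1,j_1-i_1)$, both contradictions. The main subtlety I expect is the polynomial choice in Case A: using a single $-2$ coefficient instead of two separate $-1$'s is essential to keep $A=2$, since the naive choice would push $A$ up to $3$ and weaken the bound to $3^{k-1}k$.
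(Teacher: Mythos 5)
Your proposal is correct and follows essentially the same route as the paper's proof: split on whether $2\in C_a$, use the single coefficient $-2$ to keep $A=2$ so that Theorem \ref{equivalence of equations over two fields 2}(a) applies under the stated bound, and rule out the resulting vanishing sums. The only cosmetic differences are your explicit involution/parity bookkeeping and your use of the absolute-value estimate $|1+\zeta_k^{i_1}|=2$ where the paper instead invokes Result \ref{vanishing sum of small length}.
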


\begin{proof}
Each equation $1+g^{ie}=g^{je+a}$ induces another equation $1+g^{-ie}=g^{(j-i)e+a}$, 
and these equations are different only if $i \neq 0$. Moreover, if $i=0$, then $2=g^{je+a} \in C_a$.

First, suppose that $2 \in C_a$ and $(0,a) \geq 3$. We have $g^{le+a}=2$ for some $0 \leq l \leq k-1$. There exist $0\leq i,j \leq k-1$ with $i\neq 0$ and $j\neq l$ such that $1+g^{ie}=g^{je+a}$. Writing $t=j-l$, we obtain
 $$1+g^{ie}-2g^{te}=0.$$
Note that the numbers $0, i, t$ are pairwise different. 
Write $1+x^i-2x^t$ in the polynomial form $f(x)=\sum_{i=0}^{k-1}a_ix^i$.
Note that, using the notation of Theorem \ref{equivalence of equations over two fields 2} (a), we have $A=2$.
Thus, by Theorem \ref{equivalence of equations over two fields 2} (a) and (\ref{necessary condition for (0,a) k is prime}),
we have $f(\zeta_k)=0$, as $f(g^e)=0$. Hence
$$f(\zeta_k)=1+\zeta_k^i-2\zeta_k^t=0.$$
By Result \ref{vanishing sum of small length}, this happens only when 
the terms in $f(\zeta_k)$ cancel in pairs or $f(\zeta_k)$ is similar to $1+\zeta_3+\zeta_3^2$, both of which are not possible.

\medskip

Next, suppose that $2 \not\in C_a$ and $(0,a) \geq 3$. Similar to the proof of Theorem \ref{result for (0,a)}, we obtain the  equation
$$1+g^{i_1e}-g^{(j_1-j_2)e}-g^{(j_1-j_2+i_2)e}=0,$$
where the two pairs $(i_1,j_1)$ and $(i_2,j_2)$ are different and satisfy
$$i_1 \neq 0,\ i_2 \neq 0, \ (i_2,j_2) \neq (-i_1, j_1-i_1)\} \ \text{and} \ (i_1,j_1)\neq (-i_2,j_2-i_2).$$ 
Write $f(x)=1+x^{i_1}-x^{j_1-j_2}-x^{j_1-j_2+i_2}$. 
Note that $f(x)$ is a polynomial with exactly $4$ nonzero coefficients, 
as the numbers $0, i_1, j_1-j_2$ and $j_1-j_2+i_2$ are pairwise different 
(follows from the proof of Theorem \ref{result for (0,a)}). 
Thus, by Theorem \ref{equivalence of equations over two fields 2} (a) and (\ref{necessary condition for (0,a) k is prime}),
we have
$$f(\zeta_k)=1+\zeta_k^{i_1}-\zeta_k^{j_1-j_2}-\zeta_k^{j_1-j_2+i_2}=0.$$
 By Result \ref{vanishing sum of small length}, the terms in $f(\zeta_k)$ cancel in pairs. 
 This implies in $2 \mid k$ and $i_1=i_2=k/2$, or $j_1=j_2$ and $i_1=i_2$, both of which are not possible.
\end{proof}

\begin{rmk} \label{comparison with general result on (0,a)}
The bound  (\ref{necessary condition for (0,a) k is prime}) is not better than 
the previous bound in (\ref{necessary condition for (0,a)}) (in fact, they are very close). 
However, the conclusion (\ref{conclusion for (0,a) k is prime}) is better than the conclusion (\ref{conclusion for (0,a)}).
\end{rmk}

\medskip

\begin{thm} \label{result for (a,b) k is prime}
If
\begin{equation} \label{necessary for (a,b) in the case k is prime}
p>(3^{k-1}k)^{1/\ord_k(p)},
\end{equation}
then 
$$(a,b) \leq 2.$$
\end{thm}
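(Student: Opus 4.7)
The plan is to mimic the argument of Theorem \ref{result of (a,b)}, but to replace the use of the general norm bound (Theorem \ref{bound for norm of an algebraic integer}) with the sharper bound for prime $k$ supplied by Theorem \ref{equivalence of equations over two fields 2}(a). This is what allows the stronger hypothesis (\ref{necessary for (a,b) in the case k is prime}), which is roughly $3^k$ rather than $14^{k/2}$, to suffice.

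First I would assume for contradiction that $(a,b) \geq 3$, so that there exist three distinct pairs $(i_t,j_t)$, $t=1,2,3$, with $1+g^{i_te+a}=g^{j_te+b}$. Multiplying the two differences exactly as in the proof of Theorem \ref{result of (a,b)} yields
\[
g^{(i_1+j_2)e}+g^{(i_2+j_3)e}+g^{(i_3+j_1)e}-g^{(i_1+j_3)e}-g^{(i_2+j_1)e}-g^{(i_3+j_2)e}=0,
\]
and I set $f(x)=\sum_{i=0}^{k-1}a_ix^i$ by collecting these six monomials with exponents reduced mod $k$. Clearly $\sum a_i=0$, so part (a) of Theorem \ref{equivalence of equations over two fields 2} is the relevant statement.

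Next I would bound $A=\max(A^+,A^-)$. Since only three positive and three negative contributions occur in total, $A^+\leq 3$ and $A^-\leq 3$ automatically, with equality unless some position carries both a positive and a negative monomial. Using that $i_1,i_2,i_3$ are pairwise distinct and $j_1,j_2,j_3$ are pairwise distinct (both because $C_a\cap C_b=\emptyset$), together with Lemma \ref{relation between it, jt}, one rules out each of the nine possible positive/negative coincidences; hence $A=3$. The hypothesis $p^{\ord_k(p)}>k\cdot 3^{k-1}=kA^{k-1}$ then lets me invoke Theorem \ref{equivalence of equations over two fields 2}(a) to conclude $f(\zeta_k)=0$, i.e.
\[
\zeta_k^{i_1+j_2}+\zeta_k^{i_2+j_3}+\zeta_k^{i_3+j_1}-\zeta_k^{i_1+j_3}-\zeta_k^{i_2+j_1}-\zeta_k^{i_3+j_2}=0.
\]

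At this point the argument is purely about a vanishing sum of roots of unity of length at most $6$, and it is formally identical to the case analysis in the proof of Theorem \ref{result of (a,b)}: by Result \ref{vanishing sum of small length}, $f(\zeta_k)$ either contains a $1+(-1)$ subsum, or is similar to $1+\zeta_5+\zeta_5^2+\zeta_5^3+\zeta_5^4$, or contains two $1+\zeta_3+\zeta_3^2$ subsums, or is similar to $-\zeta_3-\zeta_3^2+\zeta_5+\zeta_5^2+\zeta_5^3+\zeta_5^4$. Each possibility is already ruled out in Theorem \ref{result of (a,b)} using only Lemma \ref{relation between it, jt} and parity/divisibility properties of $k$, so those arguments transfer verbatim. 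The only delicate point I expect is the verification that positive and negative exponents cannot coincide, since this is what pins $A$ down to $3$ and matches the hypothesis exactly; once that is in place the rest of the proof is a straightforward citation of the earlier case analysis.
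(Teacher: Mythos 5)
Your proposal is correct and follows essentially the same route as the paper: derive the six-term relation, apply Theorem \ref{equivalence of equations over two fields 2}(a) with the bound on $A$, and then defer to the vanishing-sum case analysis from Theorem \ref{result of (a,b)}. The only (harmless) excess is your effort to pin down $A=3$ exactly via the nine coincidence checks; since $A^+\leq 3$ and $A^-\leq 3$ hold automatically, $A\leq 3$ already gives $kA^{k-1}\leq 3^{k-1}k$, which is all the hypothesis requires.
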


\begin{proof}
Similar to the proof of Theorem \ref{result of (a,b)}, we obtain the equation
\begin{equation} \label{first equation for (a,b) k prime}
g^{(i_1+j_2)e}+g^{(i_2+j_3)e}+g^{(i_3+j_1)e}-g^{(i_1+j_3)e}-g^{(j_2+j_1)e}-g^{(i_3+j_2)e}=0,
\end{equation}
where $(i_t, j_t)$, $t=1,2,3$, are pairwise different pairs each of which satisfy $1+g^{i_te+a}=g^{j_te+b}$.
Write the left-hand-side of (\ref{first equation for (a,b) k prime}) as $\sum_{i=0}^{k-1}a_ig^{ie}$ and set $f(x)=\sum_{i=0}^{k-1}a_ix^i$. 
We have $\sum_{i=0}^{k-1}a_i=0$
and, using the notation of Theorem \ref{equivalence of equations over two fields 2} (a), we have $A\ge 3$.
Hence Theorem \ref{equivalence of equations over two fields 2} (a) and (\ref{necessary for (a,b) in the case k is prime}) imply
$$
f(\zeta_k)=\zeta_k^{i_1+j_2}+\zeta_k^{i_2+j_3}+\zeta_k^{i_3+j_1}-\zeta_k^{i_1+j_3}-\zeta_k^{i_2+j_1}-\zeta_k^{i_3+j_2}=0.
$$
This is impossible by the proof of Theorem \ref{result of (a,b)}.
\end{proof}

\medskip

\begin{rmk} \label{comparison with general result (a,b)}
Therem \ref{result for (a,b) k is prime} is an improved version of Theorem \ref{result of (a,b)}, as the bound  (\ref{necessary for (a,b) in the case k is prime}) is better than the one in (\ref{necessary for (a,b)}). Furthermore, Theorem \ref{result (0,0) k prime}, Theorem \ref{result for (0,a) k is prime}, Theorem \ref{result for (a,0) k is prime} and Theorem \ref{result for (a,b) k is prime} prove Main Theorem 2.
\end{rmk}

\bigskip

\end{document}